\documentclass[a4paper,11pt]{article}

\usepackage{amsmath}
\usepackage{amssymb}
\usepackage{amsthm}
\usepackage{amsfonts}
\usepackage{mathtools}
\usepackage{ascmac}
\usepackage{spalign}
\usepackage{fancybox}
\usepackage[mathscr]{eucal}
\usepackage{footnpag}
\usepackage{siunitx}
\usepackage{multicol}
\usepackage{bbm}
\usepackage{framed}
\usepackage{here}

\theoremstyle{plain}
\newtheorem{theorem}{Theorem}[section]
\newtheorem*{theorem*}{Theorem}

\newtheorem*{definition*}{Definition}

\newtheorem*{proposition*}{Proposition}
\newtheorem{lemma}{Lemma}[section]
\newtheorem*{lemma*}{Lemma}
\newtheorem{corollary}{Corollary}[section]
\newtheorem*{corollary*}{Corollary}
\newtheorem{remark}{Remark}[section]
\newtheorem*{remark*}{Remark}

\title{Rooted spanning forests in wheel graphs: Fibonacci--Lucas formulas and extremal root configurations}

\author{
Shunya Tamura\thanks{
Corresponding author. 
Okegawa City Okegawa West Junior High School, 
Saitama, 363-0027, Japan, 
e-mail: shunya.tamura059@gmail.com
}
\and
Yuuho Tanaka\thanks
{Faculty of Science and Technology, Oita University, 
Oita, 870-1192, Japan, 
e-mail: tanaka-yuuho@oita-u.ac.jp
}
}

\date{}

\begin{document}
\maketitle
\begin{abstract}
In this paper, we study rooted spanning forests in the wheel graph $W_{N+1}$.
For a root set $R$ consisting of rim vertices, we give an explicit formula,
in terms of Fibonacci and Lucas numbers, for the number of rooted spanning forests
in which each connected component contains exactly one vertex of $R$.
When the central vertex is also included in the root set, we obtain a simple
product formula involving only Fibonacci numbers.

Furthermore, by using the correspondence between rooted spanning forests
and vertex identification, we derive explicit formulas for the number of
spanning trees of quotient graphs of wheel graphs obtained by identifying
several vertices into one vertex.
We then rewrite the obtained formulas combinatorially in terms of the numbers
of matchings in path graphs and cycle graphs.
In addition, for a fixed number $r$ of rim root vertices, we express the sum
of the numbers of rooted spanning forests over all rim root sets of size $r$
as coefficients of generating functions, and we solve the extremal problem
for the arrangement of roots.
\end{abstract}

\noindent
{\bf Keywords:}
wheel graph, spanning tree, rooted spanning forest,
vertex identification, Fibonacci number, Lucas number, matching, 
extremal root configuration.

\noindent
{\bf 2020 Mathematics Subject Classification:}
Primary 05C30;
Secondary 05C05, 05C50, 05A15, 11B39.

\section{Introduction}

The numbers of spanning trees and spanning forests of a graph are
fundamental enumerative invariants that reflect the structure of the graph
through minors of its Laplacian matrix \cite{Chaiken1982,ChebotarevShamis1997}.
In particular, it is known that Fibonacci and Lucas numbers naturally appear
in the numbers of spanning trees of wheel graphs and fan graphs
\cite{Hilton1974,Myers1971,Sedlacek1969}.

Let $G$ be a finite connected graph and let $\varnothing\ne R\subseteq V(G)$.
A spanning forest in which each connected component contains exactly one
vertex of $R$ is called a rooted spanning forest with root set $R$.
By the all-minors Matrix--Tree Theorem, the number of rooted spanning forests
of a graph is equal to the determinant of the principal submatrix obtained
from the Laplacian matrix by deleting the rows and columns corresponding
to the root set \cite{Chaiken1982}.
Rooted spanning forests are also related to the Matrix--Forest Theorem
\cite{ChebotarevShamis1997}.
Connections between Fibonacci and Lucas numbers, and rooted spanning forests in path and cycle graphs, have also been studied
\cite{Chebotarev2008}.

Counting rooted spanning forests is also related to counting spanning trees
of graphs obtained by identifying several vertices.
For wheel graphs with two identified vertices, Tamura and Tanaka
\cite{TamuraTanaka2026} gave formulas in terms of Fibonacci and Lucas numbers.
Moreover, Miezaki and Tamura \cite{MiezakiTamura2025} studied two-component
spanning forests separating two prescribed vertices in wheel graphs.
In this paper, we extend the result for two identified vertices to the case
of an arbitrary number of rim vertices, and also study summation problems
and extremal problems concerning root configurations.
More precisely, for a root set $R$ consisting of rim vertices of the wheel
graph $W_{N+1}$ with $|R|\geq1$, we explicitly express the number of rooted
spanning forests in which each connected component contains exactly one
vertex of $R$ in terms of Fibonacci and Lucas numbers.

\begin{theorem}
\label{thm:rim-roots}
Let $N\ge 3$, and consider the wheel graph $W_{N+1}$ with $N$ rim vertices
$v_1,v_2,\ldots,v_N$.
Let
\[
R=\{v_{a_1},v_{a_2},\ldots,v_{a_r}\},
\qquad
1\le a_1<a_2<\cdots<a_r\le N,
\]
and assume that $1\le r\le N$.
Define the cyclic intervals between adjacent root vertices by
\[
d_i=
\begin{cases}
a_{i+1}-a_i, & 1\le i\le r-1,\\
a_1+N-a_r, & i=r
\end{cases}. 
\]

The number $\mathcal F_{W_{N+1}}(R)$ of rooted spanning forests of
$W_{N+1}$ with root set $R$ consisting of rim vertices is given by
\[
\begin{aligned}
\mathcal F_{W_{N+1}}(R)
=\left(\prod_{i=1}^{r}F_{2d_i}\right)\left(\sum_{i=1}^{r}\frac{L_{2d_i}-2}{F_{2d_i}}\right)
=\sum_{i=1}^{r}\left(L_{2d_i}-2\right)\prod_{\substack{1\le j\le r\\j\ne i}}F_{2d_j}.
\end{aligned}
\]
\end{theorem}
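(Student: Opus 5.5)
We would prove the formula by the all-minors Matrix--Tree Theorem: $\mathcal F_{W_{N+1}}(R)=\det L_R$, where $L_R$ is the Laplacian of $W_{N+1}$ with the rows and columns of $R$ removed. The remaining vertices are the hub $c$ and the rim vertices outside $R$. Removing the roots cuts the rim cycle into $r$ paths $P_1,\dots,P_r$, where $P_i$ consists of the $d_i-1$ non-root vertices strictly between $v_{a_i}$ and $v_{a_{i+1}}$ (indices cyclic); some of these paths may be empty. Each rim vertex has degree $3$. Hence $L_R$ has a hub row and column, and its remaining part is block diagonal with blocks $T_i$. Each $T_i$ is the $(d_i-1)\times(d_i-1)$ tridiagonal matrix with $3$ on the diagonal and $-1$ off the diagonal. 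The hub has diagonal entry $N$ and entry $-1$ against every non-root rim vertex.

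First we compute the blocks. By the standard three-term recurrence, $D_m=\det$ of the $m\times m$ tridiagonal matrix satisfies $D_m=3D_{m-1}-D_{m-2}$ with $D_0=1$ and $D_1=3$. Therefore $D_m=F_{2m+2}$, so $\det T_i=F_{2d_i}$. This also holds for empty paths, since $F_2=1$.

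Next we expand along the hub via the Schur complement:
\[
\det L_R=\Bigl(\prod_i F_{2d_i}\Bigr)\Bigl(N-\sum_i \mathbf 1^{T}T_i^{-1}\mathbf 1\Bigr).
\]
It then suffices to show that for each $i$,
\[
d_i-\mathbf 1^{T}T_i^{-1}\mathbf 1=\frac{L_{2d_i}-2}{F_{2d_i}}.
\]
Summing over $i$ gives the result, because $\sum_i d_i=N$.

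The main step is evaluating $s_m=\mathbf 1^{T}T^{-1}\mathbf 1$ for the $m\times m$ block, with $m=d-1$. We plan to solve $Tx=\mathbf 1$ directly. The general solution of $-x_{k-1}+3x_k-x_{k+1}=1$ is
\[
x_k=1+A\varphi^{2k}+B\varphi^{-2k},
\]
with boundary conditions $x_0=x_{m+1}=0$. By symmetry, $x_k=1-\dfrac{\varphi^{2k-d}+\varphi^{d-2k}}{\varphi^{d}+\varphi^{-d}}$ up to a sign convention depending on the parity of $d$.

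An alternative route is a cofactor or telescoping computation. Note that $\sum_k x_k=\mathbf 1^{T}x$. Summing the equations $Tx=\mathbf 1$ gives $x_1+x_m+\sum_k x_k=m$, so $s_m=m-2x_1$ (using $x_1=x_m$ by symmetry). Then
\[
d-s_m=1+2x_1.
\]
By Cramer's rule, $x_1=\sum_j (T^{-1})_{1j}=\sum_{j=1}^{m}\dfrac{D_{m-j}}{D_m}$, since $(T^{-1})_{1j}=D_{m-j}/D_m$. Hence
\[
d-s_m=\frac{F_{2d}+2\sum_{j=0}^{m-1}F_{2j+2}}{F_{2d}}.
\]
Using $\sum_{k=1}^{m}F_{2k}=F_{2m+1}-1=F_{2d-1}-1$, the numerator becomes $F_{2d}+2F_{2d-1}-2=L_{2d}-2$. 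Here the last equality uses $L_n=F_n+2F_{n-1}$.

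This Fibonacci-identity bookkeeping is the step we expect to require the most care. In particular, the edge cases need checking: $d=1$ gives $L_2-2=1$ and $F_2=1$, consistent with $m=0$. The case $r=1$ also needs checking, where the single path wraps around but still has both endpoints adjacent to the unique root. The second expression in the theorem then follows by distributing the product over the sum.
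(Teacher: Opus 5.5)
Your proof is correct. It shares the paper's outer skeleton: the all-minors Matrix--Tree Theorem, the hub-bordered block-diagonal matrix, $\det A_{d_i-1}=F_{2d_i}$, a Schur complement, and $\sum_i d_i=N$. The route to the key per-interval identity $d-\mathbf 1^{T}A_{d-1}^{-1}\mathbf 1=(L_{2d}-2)/F_{2d}$ is genuinely different.

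The paper never evaluates the quadratic form $\mathbf 1^{T}A_{d-1}^{-1}\mathbf 1$. It observes that the bordered matrix $B_d$ (hub row plus the block $A_{d-1}$) is exactly the reduced Laplacian of the smaller wheel $W_{d+1}$ with one rim vertex deleted. It then imports the classical count $\tau(W_{d+1})=L_{2d}-2$ of Hilton, Myers and Sedl\'a\v{c}ek, checking $d=1,2$ by hand, and divides by $\det A_{d-1}$ via a Schur complement on $B_d$.

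You instead compute the quadratic form directly, and each step checks out, including $m=1$ and the empty case $m=0$:
\begin{itemize}
\item the column sums of $T$ give $\mathbf 1^{T}T^{-1}\mathbf 1=m-2x_1$;
\item the cofactor formula $(T^{-1})_{1j}=D_{m-j}/D_m$ gives $x_1$;
\item $\sum_{k=1}^{m}F_{2k}=F_{2m+1}-1$ together with $L_n=F_n+2F_{n-1}$ finishes the identity.
\end{itemize}

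The paper's route is shorter and more conceptual, since each interval is seen as a small wheel. However, it leans on the external wheel formula. That makes its later statement that the classical formula $\tau(W_{N+1})=L_{2N}-2$ follows from the case $r=1$ circular. Your computation is self-contained and genuinely reproves that classical formula as the case $r=1$.

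Your first attempted route via $\varphi^{2k}$ is not needed. Incidentally, the closed form you wrote already satisfies the recurrence and $x_0=x_d=0$ exactly, so no parity-dependent sign correction is required.
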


When $r=1$, this gives the classical formula for the number of spanning trees
of a wheel graph. When $r=2$, it agrees with the known formula for the case
of two identified vertices.
If the central vertex $c$ is included in the root set, then an even simpler
product formula holds.

\begin{theorem}
\label{thm:center-root}
Let $N\ge 3$, $1\le r\le N$, and let $R$ be a set consisting of $r$ rim vertices.
Let $d_1,\ldots,d_r$ be the corresponding cyclic intervals.
Then the number of rooted spanning forests of $W_{N+1}$ with root set
$\{c\}\cup R$ is
\[
\mathcal F_{W_{N+1}}(\{c\}\cup R)
=
\prod_{i=1}^{r}F_{2d_i}.
\]
\end{theorem}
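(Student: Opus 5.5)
By the all-minors Matrix--Tree Theorem, $\mathcal F_{W_{N+1}}(\{c\}\cup R)$ is the determinant of the principal submatrix of the Laplacian of $W_{N+1}$ obtained by deleting the rows and columns of $c$ and of the roots $v_{a_1},\dots,v_{a_r}$. I will compute this determinant directly.

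\textbf{Reduction to a direct sum.} Every rim vertex has degree $3$ in $W_{N+1}$. Its neighbours are $c$ and its two rim neighbours. After deleting the rows and columns of $c$ and of the roots, the remaining vertices split into the $r$ runs of consecutive non-root rim vertices strictly between $v_{a_i}$ and $v_{a_{i+1}}$ (indices cyclic). The $i$-th run has $d_i-1$ vertices. Two vertices in different runs are never adjacent once the roots are removed, so the reduced Laplacian is block diagonal. The $i$-th block is the $(d_i-1)\times(d_i-1)$ tridiagonal matrix $T_{d_i-1}$ with $3$ on the diagonal and $-1$ on the off-diagonals. (If $d_i=1$ the run is empty, and the block contributes the factor $1$.) Hence
\[
\mathcal F_{W_{N+1}}(\{c\}\cup R)=\prod_{i=1}^{r}\det T_{d_i-1}.
\]
Combinatorially, each run must attach to one of its two bounding rim roots or to $c$, and different runs do not interact. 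This matches the product structure.

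\textbf{Evaluating the block determinant.} Put $D_m=\det T_m$ with $D_0=1$. Expanding along the last row gives
\[
D_1=3,\qquad D_m=3D_{m-1}-D_{m-2}.
\]
The even-indexed Fibonacci numbers satisfy the same recurrence, $F_{2k+2}=3F_{2k}-F_{2k-2}$, which follows from $F_{n+2}=F_{n+1}+F_n$ applied twice. The initial values also agree: $F_2=1$ and $F_4=3$. By induction, $D_m=F_{2m+2}$, so $\det T_{d_i-1}=F_{2d_i}$, and this also covers $d_i=1$. Substituting into the product gives the claimed formula.

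\textbf{Edge cases.} The only point needing care is the case $r=1$, where $d_1=N$. The single run is then the path $v_{a_1+1},\dots,v_{a_1+N-1}$ of $N-1$ vertices. Its two ends are each adjacent to the same root $v_{a_1}$, and the wrap-around edge through $v_{a_1}$ is deleted along with the root. So the block is still the tridiagonal matrix $T_{N-1}$, with no corner entries, because $N\ge3$ and there are no multiple edges.

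The case $r=N$ gives all $d_i=1$ and the value $1=F_2^N$, as expected. I expect no real obstacle here. The main thing to verify is the bookkeeping that the diagonal entries stay $3$ and that no cross-run entries survive.
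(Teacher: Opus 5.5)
Your proposal is correct and follows essentially the same route as the paper. Both apply the all-minors Matrix--Tree Theorem, note that deleting $c$ and $R$ leaves the block-diagonal matrix $A_{d_1-1}\oplus\cdots\oplus A_{d_r-1}$ with diagonal entries $3$, and evaluate each block as $F_{2d_i}$ via the recurrence $D_m=3D_{m-1}-D_{m-2}$ (the paper's Lemma~\ref{lem:det-A}); your explicit check that the $r=1$ block has no corner entries is a detail the paper leaves implicit.
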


We also fix the number $r$ of rim root vertices and compute the sum over
all rim root sets of size $r$.
Let $V_{\mathrm{rim}}$ be the set of rim vertices, and put
\[
A_{N,r}
=
\sum_{\substack{R\subseteq V_{\mathrm{rim}}\\|R|=r}}
\mathcal F_{W_{N+1}}(R),
\qquad
B_{N,r}
=
\sum_{\substack{R\subseteq V_{\mathrm{rim}}\\|R|=r}}
\mathcal F_{W_{N+1}}(\{c\}\cup R).
\]
Then
\[
A_{N,r}
=
N[z^{N-r}]
\frac{1+z}
{(1-z)(1-3z+z^2)^r}
\]
and
\[
B_{N,r}
=
\frac{N}{r}
[z^{N-r}]
\frac{1}
{(1-3z+z^2)^r}
\]
hold.

Another main result of this paper is the complete solution of an extremal
problem concerning the arrangement of roots.
Fix $N$ and $r$, and write
\[
N=qr+s,
\qquad
0\le s<r.
\]
Whether or not the central vertex is included in the root set, the number of
rooted spanning forests is maximized when, among the cyclic intervals
$d_1,\ldots,d_r$, exactly $s$ of them are equal to $q+1$ and the remaining
$r-s$ are equal to $q$.

On the other hand, the number of rooted spanning forests is minimized when
the cyclic interval sequence is a permutation of
\[
(N-r+1,1,\ldots,1).
\]

This paper is organized as follows.
In Section \ref{sec:preliminaries}, we prepare basic notation and facts
on rooted spanning forests.
In Section \ref{sec:rooted spanning forest}, we consider rooted spanning
forests both in the case where the root set consists only of rim vertices
and in the case where the central vertex is included in the root set, and
we derive explicit formulas for the number of rooted spanning forests.
We also discuss the relationship between rooted spanning forests and
matchings in path graphs and cycle graphs.
In Section \ref{sec:sum-formulas}, we derive summation formulas over all
rim root sets.
In Section \ref{sec:extremal}, we solve the maximization and minimization
problems concerning root configurations.
Finally, in Section \ref{sec:conclusion}, we give concluding remarks and
discuss future problems.

\section{Preliminaries}
\label{sec:preliminaries}

In this section, we collect notation and basic facts used in this paper.
Unless otherwise stated, all graphs considered are finite undirected graphs.
However, we allow multiple edges which arise from vertex identification.
Multiple edges are counted with their multiplicities, while loops are ignored
because they do not contribute to the number of spanning trees.
The sequence $\{F_n\}_{n\ge0}$ defined by
\[
F_0=0,\qquad F_1=1,\qquad
F_{n+2}=F_{n+1}+F_n
\qquad (n\ge0)
\]
is called the Fibonacci sequence.
Also, the sequence $\{L_n\}_{n\ge0}$ defined by
\[
L_0=2,\qquad L_1=1,\qquad
L_{n+2}=L_{n+1}+L_n
\qquad (n\ge0)
\]
is called the Lucas sequence.

\subsection{Spanning trees and rooted spanning forests}

We first recall spanning trees, rooted spanning forests, and their relation
to vertex identification.
For a finite connected graph $G$, we denote by $\tau(G)$ the number of
spanning trees of $G$.
A spanning forest of $G$ is a forest containing all vertices of $G$.
Let $\varnothing\ne R\subseteq V(G)$.
A rooted spanning forest with root set $R$ is a spanning forest in which
each connected component contains exactly one vertex of $R$.
In what follows, we denote the number of such forests by $\mathcal F_G(R)$.

Let
\[
L_G=D_G-A_G
\]
be the Laplacian matrix of $G$.
Here $D_G$ and $A_G$ denote the degree matrix and the adjacency matrix,
respectively.
If $G$ has multiple edges, then the entries of $A_G$ count edge multiplicities,
and the degrees are also counted with multiplicities.
For a vertex $v\in V(G)$, let $L_G[v]$ be the matrix obtained from $L_G$
by deleting the row and column corresponding to $v$.

\begin{theorem}[Kirchhoff's Matrix--Tree Theorem; see, e.g., \cite{Chaiken1982}]
\label{thm:matrix-tree}
For a finite connected graph $G$ and any vertex $v\in V(G)$, we have
\[
\tau(G)=\det L_G[v].
\]
\end{theorem}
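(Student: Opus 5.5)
The approach I would take is the standard incidence-matrix argument combined with the Cauchy--Binet formula. Let $G$ have vertex set $V(G)=\{u_1,\dots,u_n\}$ and edge multiset $E(G)=\{e_1,\dots,e_m\}$. Loops are discarded, since they contribute nothing to $L_G$ and lie in no spanning tree. Multiple edges are kept as separate columns.

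Fix an arbitrary orientation of each edge. Define the $n\times m$ incidence matrix $B$ by setting $B_{u,e}=1$ if $u$ is the tail of $e$, $B_{u,e}=-1$ if $u$ is the head, and $0$ otherwise. The first step is to check directly that $L_G=BB^{T}$ with multiplicities counted. The diagonal entries count the edges at $u$, and the off-diagonal entry at $(u,w)$ is minus the number of $u$--$w$ edges. Deleting the row of $v$ gives an $(n-1)\times m$ matrix $B_v$ with $L_G[v]=B_vB_v^{T}$.

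By Cauchy--Binet,
\[
\det L_G[v]=\sum_{S}\det\bigl(B_v[S]\bigr)^2 ,
\]
where $S$ runs over the $(n-1)$-element subsets of edges and $B_v[S]$ is the square submatrix on the columns in $S$. If $m<n-1$, both sides vanish: the sum is empty, and $G$ is not connected, so it has no spanning tree. The theorem therefore reduces to the following key lemma: $\det B_v[S]=\pm1$ if $S$ is the edge set of a spanning tree, and $\det B_v[S]=0$ otherwise.

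For the zero case, note that a set of $n-1$ edges that is not a spanning tree must leave the spanning subgraph $(V(G),S)$ disconnected. Choose a component $C$ not containing $v$. Every edge of $S$ either has both ends in $C$ or none. Hence the sum of the rows of $B_v[S]$ indexed by $C$ is zero, since each column contributes $+1-1$ or $0$. So $B_v[S]$ is singular.

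For the tree case, I would argue by induction on $n$. A tree with at least two vertices has at least two leaves, so some leaf $\ell\neq v$ exists. The row of $\ell$ in $B_v[S]$ has exactly one nonzero entry, equal to $\pm1$, in the column of its pendant edge. Expanding along that row gives $\pm\det B_v[S\setminus\{\ell\text{'s edge}\}]$, restricted to the rows $V\setminus\{v,\ell\}$. This is exactly the reduced incidence matrix of the tree $S-\ell$ on $V\setminus\{\ell\}$, still with $v$ deleted. Induction then gives $\pm1$, with the base case being the trivial empty determinant for $n=1$.

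I expect the main obstacle to be this key lemma, in particular the bookkeeping that the minor after leaf expansion is again a reduced incidence matrix of the right form. The rest is formal. Summing the squares of $\pm1$ over spanning trees gives $\tau(G)$, independently of the choice of $v$ and of the orientation.
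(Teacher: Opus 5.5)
Your proof is correct. The factorization $L_G=BB^{T}$ treats parallel edges as separate columns and discards loops, which matches the paper's conventions. Cauchy--Binet, the zero-row-sum argument on a component of $(V(G),S)$ avoiding $v$, and the leaf expansion at a leaf $\ell\neq v$ are all sound.

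The paper gives no proof of this statement. It quotes it as classical, and it is exactly the case $|R|=1$ of the all-minors theorem (Theorem~\ref{thm:all-minors}), which the paper takes from Chaiken's combinatorial proof. So your incidence-matrix route is a genuinely different argument from what the paper relies on.

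What your route buys is self-containment. It also adapts easily to the principal-minor form the paper actually uses:
\begin{itemize}
\item Delete the rows of all of $R$ instead of only $v$.
\item If some component of $(V(G),S)$ misses $R$, its rows sum to zero, so the minor vanishes.
\item Otherwise, $|S|=|V(G)|-|R|$ together with at most $|R|$ components forces $(V(G),S)$ to be a forest with exactly one root per component.
\item Expanding along a leaf outside $R$ (one exists whenever $S\neq\varnothing$) then gives $\pm1$ by the same induction.
\end{itemize}

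What the citation buys is generality. Chaiken's theorem covers arbitrary minors, not only principal ones. For those, a Cauchy--Binet argument would additionally have to track the signs of products of two different reduced incidence minors.
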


Let $\varnothing\ne R\subseteq V(G)$.
We write $L_G[R]$ for the matrix obtained from $L_G$ by deleting all rows
and columns corresponding to the vertices in $R$.
When $R=V(G)$, the matrix $L_G[R]$ is the empty matrix, and its determinant
is defined to be $1$.

The following theorem is a principal-minor form of the all-minors
Matrix--Tree Theorem.

\begin{theorem}[All-minors Matrix--Tree Theorem;
Chaiken \cite{Chaiken1982}, Theorem 1]
\label{thm:all-minors}
Let $G$ be a finite connected graph, and let
$\varnothing\ne R\subseteq V(G)$.
Then
\[
\mathcal F_G(R)=\det L_G[R].
\]
\end{theorem}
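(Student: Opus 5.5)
The plan is to reduce this theorem to Kirchhoff's Matrix--Tree Theorem (Theorem~\ref{thm:matrix-tree}) by identifying all the roots into a single vertex. Let $G/R$ be the graph obtained from $G$ by identifying every vertex of $R$ into one new vertex $\rho$. As agreed in the preliminaries, we keep multiple edges and discard loops, so edges of $G$ with both ends in $R$ disappear. Since $G$ is connected, $G/R$ is connected. The degenerate case $R=V(G)$ is handled first and separately: there the only rooted spanning forest is the edgeless one, so $\mathcal F_G(R)=1=\det$ of the empty matrix.

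\textbf{Step 1 (the Laplacian minor).} I will show that $L_{G/R}[\rho]=L_G[R]$ entrywise. Rows and columns of both matrices are indexed by $V(G)\setminus R$.
\begin{itemize}
\item For $u\ne w$ outside $R$, the edges between $u$ and $w$ are untouched by the identification, so the off-diagonal entries agree.
\item For $u\notin R$, each edge from $u$ to a vertex of $R$ becomes an edge from $u$ to $\rho$ (counted with multiplicity), and none of them becomes a loop. Hence $\deg_{G/R}u=\deg_G u$, and the diagonal entries agree.
\end{itemize}
Theorem~\ref{thm:matrix-tree} applied at the vertex $\rho$ then gives $\tau(G/R)=\det L_G[R]$.

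\textbf{Step 2 (the bijection).} It remains to prove $\tau(G/R)=\mathcal F_G(R)$. Edges of $G/R$ correspond bijectively to edges of $G$ not lying inside $R$, so edge sets can be transported back and forth. Let $n=|V(G)|$.

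\emph{Forests to trees.} Let $E$ be a rooted spanning forest with root set $R$. It has exactly $|R|$ components, so $|E|=n-|R|$, and no edge of $E$ joins two roots (otherwise one component would contain two roots). In $G/R$, the edges $E$ connect every vertex to $\rho$, because each component of $E$ contains a root. Since $|E|=n-|R|=|V(G/R)|-1$, the image of $E$ is a spanning tree.

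\emph{Trees to forests.} Conversely, let $T$ be a spanning tree of $G/R$ and pull it back to an edge set in $G$.
\begin{itemize}
\item A cycle in $G$ either avoids $R$, and then survives in $G/R$, or passes through $R$, and then yields a cycle through $\rho$. Either way $T$ would contain a cycle, so the pullback is acyclic.
\item A path in the pullback joining two distinct roots would become a cycle through $\rho$ in $T$. Hence no component contains two roots.
\item Every vertex is joined to $\rho$ in $T$, so every component of the pullback contains at least one root.
\end{itemize}
So the pullback is a rooted spanning forest with root set $R$. The two maps are mutually inverse, because they act as the identity on edge sets.

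The only step requiring care is Step 2, specifically the argument that cycles through $\rho$ in $G/R$ correspond exactly to root-to-root paths or cycles in $G$. The matrix identity in Step 1 and the final application of Theorem~\ref{thm:matrix-tree} are routine.
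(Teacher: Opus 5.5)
Your argument is correct, but it does not follow the paper, which gives no proof of Theorem~\ref{thm:all-minors} and simply cites Chaiken's all-minors theorem. Chaiken's result also covers arbitrary signed, non-principal minors.

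The paper also runs the logic in the opposite direction from yours. In the proof of Theorem~\ref{thm:identification-forest} it establishes exactly your Step~1 identity $L_{G/R}[r_\ast]=L_G[R]$. It then combines Kirchhoff with the cited all-minors theorem to \emph{deduce} $\tau(G/R)=\mathcal F_G(R)$. You instead prove $\tau(G/R)=\mathcal F_G(R)$ directly by the edge-set bijection, and then obtain the principal-minor form from Kirchhoff (Theorem~\ref{thm:matrix-tree}) alone. You never invoke Theorem~\ref{thm:identification-forest}, which in the paper rests on Theorem~\ref{thm:all-minors}, so there is no circularity.

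Your route makes the argument self-contained. The paper only ever uses principal minors, so your elementary reduction suffices for everything downstream. It also turns the vertex-identification correspondence into a basic combinatorial fact rather than a corollary. The citation buys generality that the paper never uses.

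One small precision point in Step~2 concerns the acyclicity bullet. If a cycle of the pullback meets $R$ in two or more vertices, its image is a closed walk through $\rho$ several times, not a cycle. You must pass to a segment between consecutive visits to $R$, which is really your second bullet. When that segment has one interior vertex $u$, the resulting cycle in $G/R$ is a pair of parallel $u\rho$ edges. That is still a cycle in the multigraph $G/R$, but it is worth saying explicitly.

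A counting argument avoids the acyclicity check entirely:
\begin{itemize}
\item The pullback has $n-|R|$ edges.
\item Every component contains a root, so there are at most $|R|$ components.
\item A graph on $n$ vertices with $c$ components has at least $n-c$ edges, with equality exactly for forests.
\end{itemize}
Together these force $c=|R|$, so the pullback is a forest in which each component contains exactly one root.
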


In particular, when $|R|=1$, the number $\mathcal F_G(R)$ agrees with the
ordinary number of spanning trees $\tau(G)$.
Also, when $R=V(G)$, the only corresponding rooted spanning forest is the
forest with no edges, and hence $\mathcal F_G(V(G))=1$.

Next, we recall the relation between vertex identification and rooted
spanning forests.
Let $G$ be a finite connected graph, and let $G/R$ be the quotient graph
obtained by identifying all vertices in $R$ into a single vertex.
Multiple edges arising from this identification are kept with their
multiplicities, while loops are deleted.

\begin{theorem}
\label{thm:identification-forest}
Let $G$ be a finite connected graph, and let $\varnothing\ne R\subseteq V(G)$.
Then
\[
\tau(G/R)=\mathcal F_G(R).
\]
\end{theorem}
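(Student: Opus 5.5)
The plan is to deduce the statement from Theorem \ref{thm:matrix-tree} and Theorem \ref{thm:all-minors} by comparing Laplacian minors; no separate combinatorial argument should be needed. Write $\rho$ for the vertex of $G/R$ obtained by merging $R$. First I describe the Laplacian of $G/R$. Its vertex set is $(V(G)\setminus R)\cup\{\rho\}$. For $u,w\notin R$, the multiplicity of the edge $uw$ in $G/R$ equals that in $G$. For $u\notin R$, the multiplicity of the edge $u\rho$ equals $\sum_{x\in R} m_G(u,x)$. Edges with both ends in $R$ become loops and are discarded.

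The key observation is that for $u\notin R$ the degree of $u$ is unchanged: every edge at $u$ in $G$ either goes to another vertex outside $R$ or to a vertex of $R$. In the latter case it becomes an edge to $\rho$, not a loop, because $u\notin R$. Hence deleting the row and column of $\rho$ from $L_{G/R}$ gives exactly the matrix indexed by $V(G)\setminus R$ whose diagonal entries are $\deg_G(u)$ and whose off-diagonal entries are $-m_G(u,w)$. That is,
\[
L_{G/R}[\rho]=L_G[R].
\]

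Next I check that $G/R$ is connected. Any path in $G$ maps to a walk in $G/R$, possibly after contracting steps inside $R$, so connectivity is inherited from $G$. Theorem \ref{thm:matrix-tree} then applies to $G/R$ at the vertex $\rho$:
\[
\tau(G/R)=\det L_{G/R}[\rho]=\det L_G[R]=\mathcal F_G(R),
\]
where the last equality is Theorem \ref{thm:all-minors}.

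The edge case $R=V(G)$ remains. Then $G/R$ is a single vertex, so $\tau(G/R)=1$, and $\mathcal F_G(V(G))=1$ as noted in the paper. This is consistent with the convention that the empty determinant is $1$.

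The only delicate point is the bookkeeping of multiplicities and loops, namely verifying that the degrees of non-root vertices are preserved and that the discarded loops involve only edges inside $R$; these affect only the deleted row and column. As an independent cross-check, one can use the bijection between spanning trees of $G/R$ and rooted spanning forests of $G$: pulling a tree's edges back to $G$ yields a forest in which each component meets $R$ exactly once, and conversely.
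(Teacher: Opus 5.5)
Your proposal is correct and follows essentially the same route as the paper: show that deleting the row and column of the merged vertex from $L_{G/R}$ gives exactly $L_G[R]$, then combine Kirchhoff's theorem for $G/R$ with the all-minors theorem for $G$. Your explicit checks are small additions the paper leaves implicit: that degrees of vertices outside $R$ are preserved, that $G/R$ is connected, and that the case $R=V(G)$ is consistent.
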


\begin{proof}
Let $G/R$ be the quotient graph obtained by identifying all vertices of
$R$ into one vertex $r_\ast$.

Consider the principal submatrix obtained from the Laplacian matrix of
$G/R$ by deleting the row and column corresponding to $r_\ast$.
By identifying the vertices of $R$, the adjacency relations among vertices
in $V(G)\setminus R$ do not change.
Moreover, for each vertex $v\in V(G)\setminus R$, the number of edges
from $v$ to $R$ is preserved as the multiplicity of the edges joining
$v$ and $r_\ast$ in the quotient graph.
Therefore, this principal submatrix coincides with $L_G[R]$.

Hence, by Kirchhoff's Matrix--Tree Theorem and the all-minors
Matrix--Tree Theorem, we obtain
\[
\tau(G/R)
=
\det L_G[R]
=
\mathcal F_G(R).
\]
\end{proof}

\subsection{Wheel graph}

Let $C_N$ be the cycle graph on $N$ vertices, where $N\ge3$, and write
\[
V(C_N)=\{v_1,v_2,\ldots,v_N\}.
\]
The indices are understood modulo $N$; in particular, $v_{N+1}=v_1$.

The wheel graph $W_{N+1}$ is the graph obtained from the cycle graph $C_N$
by adding a new vertex $c$ and joining $c$ to every $v_i$.
That is,
\[
\begin{aligned}
V(W_{N+1})
&=
\{c,v_1,v_2,\ldots,v_N\} \\
E(W_{N+1})
&=
\{\{v_i,v_{i+1}\}:1\le i\le N\}
\cup
\{\{c,v_i\}:1\le i\le N\}.
\end{aligned}
\]
We call $c$ the central vertex, and $v_1,\ldots,v_N$ the rim vertices.

The following classical formula is known for the number of spanning trees
of a wheel graph.

\begin{theorem}[Hilton \cite{Hilton1974},
Myers \cite{Myers1971},
Sedl\'a\v{c}ek \cite{Sedlacek1969}]
\label{thm:wheel-tree}
The number of spanning trees of the wheel graph $W_{N+1}$ is
\[
\tau(W_{N+1})=L_{2N}-2.
\]
\end{theorem}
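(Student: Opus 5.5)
We prove the formula with Kirchhoff's Matrix--Tree Theorem (Theorem~\ref{thm:matrix-tree}), deleting the row and column of the central vertex $c$. Every rim vertex has degree $3$ in $W_{N+1}$. So after $c$ is removed, the reduced Laplacian is
\[
L_{W_{N+1}}[c]=3I_N-A_{C_N},
\]
where $A_{C_N}$ is the adjacency matrix of the cycle $C_N$. This matrix is circulant. It is therefore diagonalized by the discrete Fourier basis, with eigenvalues $3-2\cos(2\pi k/N)$ for $k=0,1,\ldots,N-1$. Hence
\[
\tau(W_{N+1})=\prod_{k=0}^{N-1}\Bigl(3-2\cos\tfrac{2\pi k}{N}\Bigr).
\]

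The main step is to evaluate this product in closed form. We write $3=t+t^{-1}$ with $t=\varphi^2$, where $\varphi=(1+\sqrt5)/2$. This works because $\varphi^2+\varphi^{-2}=L_2=3$. Each factor then factors as
\[
t+t^{-1}-\omega^k-\omega^{-k}=t^{-1}(t-\omega^k)(t-\omega^{-k}),\qquad \omega=e^{2\pi i/N}.
\]
Taking the product over $k$ and using $\prod_{k}(t-\omega^k)=t^N-1$ twice (as $k$ runs over all residues, so does $-k$), we get
\[
\prod_{k=0}^{N-1}\Bigl(t+t^{-1}-2\cos\tfrac{2\pi k}{N}\Bigr)=t^{-N}(t^N-1)^2=t^N+t^{-N}-2 .
\]

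Substituting $t=\varphi^2$ and using Binet's formula $L_n=\varphi^n+\psi^n$ with $\psi=-\varphi^{-1}$ gives
\[
\varphi^{2N}+\varphi^{-2N}=\varphi^{2N}+\psi^{2N}=L_{2N},
\]
and hence $\tau(W_{N+1})=L_{2N}-2$. The only place needing care is this trigonometric product identity. It is a standard Chebyshev-type identity, but it should be justified cleanly through the cyclotomic factorization above rather than quoted.

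As a fully integer-valued alternative, one could compute $\det(3I_N-A_{C_N})$ with the transfer matrix $\begin{pmatrix}3&-1\\1&0\end{pmatrix}$. This matrix has trace $3$ and determinant $1$, and for a cyclic tridiagonal matrix the determinant equals $\operatorname{tr}(M^N)-2$. Its eigenvalues are $\varphi^{\pm2}$, so we again obtain $L_{2N}-2$. That route has its own obstacle: the cyclic-determinant identity must be verified, including the sign contributions of the corner entries. For this reason the eigenvalue argument is preferred.
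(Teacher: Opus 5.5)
Your proof is correct. It differs from the paper only in the sense that the paper gives no proof of this statement at all. The paper quotes the result from the classical literature and uses it as an input in Lemma~\ref{lem:det-B}, to evaluate $\det B_d$. That matrix is the reduced Laplacian with a \emph{rim} vertex deleted, and it is not circulant. Lemma~\ref{lem:det-B} then feeds into the proof of Theorem~\ref{thm:rim-roots}; for $r=1$ the matrix $M$ there is exactly $B_N$. So the paper's later remark that Theorem~\ref{thm:wheel-tree} follows from Theorem~\ref{thm:rim-roots} is a consistency check, not a derivation. Your argument supplies an independent proof that makes the paper's chain self-contained.

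The decisive choice in your proof is deleting the centre. This makes the reduced Laplacian the circulant $3I_N-A_{C_N}$, and $N\ge 3$ ensures the two rim neighbours of each vertex are distinct. Your cyclotomic factorization with $t=\varphi^2$ then evaluates the eigenvalue product cleanly, at the cost of a detour through complex numbers.

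If you want an integer-only version built from the paper's own tools, expand $\det(3I_N-A_{C_N})$ over permutations and separate the terms that use the corner entries:
\begin{itemize}
\item the terms avoiding the corners give $\det A_N$;
\item the transposition of $v_1$ and $v_N$ contributes $-\det A_{N-2}$;
\item each of the two $N$-cycles contributes $(-1)^{N-1}(-1)^N=-1$.
\end{itemize}
Hence the determinant is $\det A_N-\det A_{N-2}-2=F_{2N+2}-F_{2N-2}-2=L_{2N}-2$, using Lemma~\ref{lem:det-A} and $F_{n+2}-F_{n-2}=F_{n+1}+F_{n-1}=L_n$. This also settles the corner-sign bookkeeping you flagged for the transfer-matrix alternative.
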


The number of spanning trees of a fan graph is also expressed in terms
of Fibonacci numbers.
The fan graph $\operatorname{Fan}_{n+1}$ is the graph obtained from the
path $P_n$ by adding one new vertex and joining it to every vertex of
$P_n$.

\begin{theorem}[Hilton \cite{Hilton1974}]
\label{thm:fan-tree}
For every integer $n\ge 1$, we have
\[
\tau(\operatorname{Fan}_{n+1})=F_{2n}.
\]
\end{theorem}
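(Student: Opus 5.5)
The statement immediately above is Theorem~\ref{thm:fan-tree}, $\tau(\operatorname{Fan}_{n+1})=F_{2n}$. I would prove it by a linear recursion in $n$, using Theorem~\ref{thm:matrix-tree} with the apex deleted. Let $h$ be the apex and $u_1,\ldots,u_n$ the path vertices. Deleting the row and column of $h$ leaves the $n\times n$ tridiagonal matrix $M_n$. Its diagonal is $(2,3,3,\ldots,3,2)$, since the endpoints have degree $2$ and the interior path vertices have degree $3$. Its off-diagonal entries are $-1$. So $\tau(\operatorname{Fan}_{n+1})=\det M_n$.

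The plan is to compute $\det M_n$ by expanding along the first row. For this I introduce the auxiliary determinants $D_k$ of the $k\times k$ tridiagonal matrix with diagonal $(3,\ldots,3,2)$ and off-diagonal entries $-1$. These satisfy $D_k=3D_{k-1}-D_{k-2}$, with $D_0=1$ and $D_1=2$. The Fibonacci identity $F_{m+2}=3F_m-F_{m-2}$ then gives $D_k=F_{2k+1}$ by induction. The cases $k=0,1$ hold since $F_1=1$ and $F_3=2$.

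Expanding $\det M_n$ along the first row gives $\det M_n=2D_{n-1}-D_{n-2}$ for $n\ge 2$. This equals $2F_{2n-1}-F_{2n-3}=F_{2n-1}+F_{2n-2}=F_{2n}$. The small cases are checked directly: for $n=1$ the matrix is $(1)$, giving $\tau=1=F_2$; for $n=2$ the determinant is $\det\begin{pmatrix}2&-1\\-1&2\end{pmatrix}=3=F_4$.

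A purely combinatorial alternative is a deletion–contraction recursion on the edge $u_{n-1}u_n$, or a count by the last apex edge. The determinant route is cleaner, so I would use it. The only mild obstacle is the boundary bookkeeping: the last diagonal entry is $2$, not $3$, so $D_k$ must be defined with that entry and its initial values matched correctly. I would verify the recursion $F_{m+2}=3F_m-F_{m-2}$ directly: $F_{m+2}=2F_m+F_{m-1}$ and $F_{m-1}=F_m-F_{m-2}$, so $F_{m+2}=3F_m-F_{m-2}$.
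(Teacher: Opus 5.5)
Your proof is correct. The reduced Laplacian at the apex is the tridiagonal matrix $M_n$ with diagonal $(2,3,\ldots,3,2)$. Your auxiliary determinants satisfy $D_k=3D_{k-1}-D_{k-2}$ with $D_0=1$, $D_1=2$, so $D_k=F_{2k+1}$, and then $2F_{2n-1}-F_{2n-3}=F_{2n}$. The degenerate cases $n=1,2$ are handled properly.

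The paper gives no proof of this statement; it quotes it from Hilton. It is used only to interpret the factors $F_{2d_i}$ in Remark~\ref{rem:center-fan}, and, via the Benjamin--Yerger bijection with matchings of $P_{2n-1}$, in Section~\ref{sec:matching}.

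So the comparison is between a citation and your self-contained argument. Your route needs only Theorem~\ref{thm:matrix-tree} and the same device as the paper's Lemma~\ref{lem:det-A}: match a tridiagonal recurrence to $F_{m+2}=3F_m-F_{m-2}$. It therefore fits the paper's own methods. What the citation, together with the bijective proof of Benjamin and Yerger, provides is the combinatorial meaning of $F_{2n}$ as the number of matchings of $P_{2n-1}$. The paper exploits that meaning in Corollary~\ref{cor:matching-center}, and a determinant evaluation does not supply it.

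If you want to lean even more on the paper's machinery, you can drop $D_k$ altogether. Write $M_n=A_n-E_{11}-E_{nn}$, where $E_{ii}$ is a diagonal matrix unit, and use linearity of the determinant in rows $1$ and $n$. Then Lemma~\ref{lem:det-A} gives, for $n\ge 2$,
\[
\det M_n=\det A_n-2\det A_{n-1}+\det A_{n-2}=F_{2n+2}-2F_{2n}+F_{2n-2}=F_{2n}.
\]
The last step uses $F_{2n+2}=3F_{2n}-F_{2n-2}$.
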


A matching of a graph $G$ is a set of edges no two of which share an endpoint.
The empty set is also regarded as a matching.
A matching is called perfect if it covers all vertices of $G$.
Otherwise, it is called imperfect.

For $N\ge2$, let $m^*(C_{2N})$ denote the number of imperfect matchings
of the even cycle graph $C_{2N}$.
By Benjamin and Yerger \cite{BY2006}, we have
\[
m^*(C_{2N})=L_{2N}-2.
\]

In what follows, for convenience, we regard $C_2$ as a $2$-cycle with two
parallel edges between two vertices, and define
\[
m^*(C_2)=1=L_2-2.
\]

Finally, we define the cyclic intervals used in the following sections.
For the wheel graph $W_{N+1}$, consider a root set consisting of rim vertices,
\[
R=\{v_{a_1},v_{a_2},\ldots,v_{a_r}\}  \quad (1\leq a_1< a_2< \ldots < a_r\leq N,\;1\leq r\leq N).
\]

We define the cyclic interval $d_i$ between adjacent root vertices
$v_{a_i}$ and $v_{a_{i+1}}$ by
\[
d_i=
\begin{cases}
a_{i+1}-a_i, & 1\le i\le r-1, \\
a_1+N-a_r, & i=r.
\end{cases}
\]
Then each $d_i$ is a positive integer, and
\[
d_1+d_2+\cdots+d_r=N
\]
holds.

Changing the choice of the initial root vertex only cyclically permutes
the sequence of cyclic intervals $(d_1,d_2,\ldots,d_r)$.
Therefore, this cyclic interval sequence is uniquely determined up to
cyclic permutation.

\section{Rooted spanning forests in wheel graphs}
\label{sec:rooted spanning forest}

In this section, we describe formulas for the number of rooted spanning
forests in a wheel graph in terms of the cyclic interval sequence
$(d_1,d_2,\ldots,d_r)$.
We consider the case where the roots are rim vertices and the case where
the central vertex is included in the root set.
At the end of the section, we briefly discuss a relation with graph matchings.

\subsection{Rooted spanning forests with rim roots}
\label{sec:rim-roots}

In this subsection, we compute the number of rooted spanning forests whose
root set consists only of rim vertices.
The obtained formula also gives the number of spanning trees of the quotient
graph of a wheel graph obtained by identifying several rim vertices into one
vertex.
The main tools in this subsection are the all-minors Matrix--Tree Theorem
and the Schur complement.

We first prepare the determinant of a tridiagonal matrix which will be used
in the proof of the main theorem.
For $n\ge 1$, put
\[
A_n=
\begin{bmatrix}
3 & -1 & 0 & \cdots & 0\\
-1 & 3 & -1 & \ddots & \vdots\\
0 & -1 & 3 & \ddots & 0\\
\vdots & \ddots & \ddots & \ddots & -1\\
0 & \cdots & 0 & -1 & 3
\end{bmatrix}.
\]
Also, let $A_0$ be the empty matrix and define $\det A_0=1$.

When $n=0$, we have
\[
\det A_0=1=F_2,
\]
and when $n=1$, we have
\[
\det A_1=3=F_4.
\]
The following elementary determinant formula will be used repeatedly.
\begin{lemma}
\label{lem:det-A}
For every integer $n\ge 0$,
\[
\det A_n=F_{2n+2}
\]
holds.
\end{lemma}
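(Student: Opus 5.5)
We will prove the lemma by induction on $n$, using the three-term recurrence obtained by expanding $\det A_n$ along its last row, together with the Fibonacci identity $F_{m+2}=3F_m-F_{m-2}$.

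\textbf{Step 1: the base cases.} These are $n=0$ and $n=1$, which were already checked before the statement: $\det A_0=1=F_2$ and $\det A_1=3=F_4$.

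\textbf{Step 2: the determinant recurrence.} For $n\ge 2$, expand $\det A_n$ along the last row. The diagonal entry $3$ contributes $3\det A_{n-1}$. The entry $-1$ in position $(n,n-1)$ contributes $-(-1)\cdot\det M$, where $M$ is the minor obtained by deleting row $n$ and column $n-1$. The last column of $M$ contains a single nonzero entry $-1$, in its last row, so expanding $M$ along that column gives $\det M=-\det A_{n-2}$. Altogether,
\[
\det A_n=3\det A_{n-1}-\det A_{n-2}\qquad(n\ge2),
\]
which is the standard recurrence for tridiagonal Toeplitz determinants.

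\textbf{Step 3: the matching Fibonacci identity.} We need $F_{2n+2}=3F_{2n}-F_{2n-2}$ for $n\ge2$. Applying the Fibonacci recurrence twice,
\[
F_{m+2}=F_{m+1}+F_m=2F_m+F_{m-1}=2F_m+(F_m-F_{m-2})=3F_m-F_{m-2}.
\]
This holds for every $m\ge2$; we use it with $m=2n$.

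\textbf{Step 4: the induction.} Since $\det A_n$ and $F_{2n+2}$ satisfy the same second-order recurrence and agree at $n=0,1$, they agree for all $n\ge0$.

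The only step needing care is the sign bookkeeping in the cofactor expansion of Step 2. A quick check guards against a sign error: $\det A_2=9-1=8=F_6$, which is consistent with the recurrence, since $3\cdot 3-1=8$.
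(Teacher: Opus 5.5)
Your proposal is correct and follows the paper's proof essentially verbatim: the same base cases $n=0,1$, the same last-row expansion giving $\det A_n=3\det A_{n-1}-\det A_{n-2}$, and the same comparison with $F_{2n+2}=3F_{2n}-F_{2n-2}$. Your cofactor sign bookkeeping and the two-line derivation of the Fibonacci identity are both right, and they fill in details the paper leaves implicit.
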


\begin{proof}
When $n=0$, we have $\det A_0=1=F_2$, and when $n=1$, we have
$\det A_1=3=F_4$.

Let $n\ge2$.
Expanding the determinant of $A_n$ along the last row, we obtain
\[
\det A_n
=
3\det A_{n-1}-\det A_{n-2}.
\]

The even-indexed Fibonacci numbers satisfy
\[
F_{2n+2}
=
3F_{2n}-F_{2n-2}.
\]
Hence the two sequences $\{\det A_n\}_{n\ge 0}$ and $\{F_{2n+2}\}_{n\ge 0}$ have the same recurrence relation and the same initial values.
Therefore,
\[
\det A_n=F_{2n+2}
\]
holds.
\end{proof}

Next, we define the matrix $B_d$ which will be used later.
For $d\ge 2$, put
\[
B_d=
\begin{bmatrix}
d & -\mathbf{1}_{d-1}^{T}\\
-\mathbf{1}_{d-1} & A_{d-1}
\end{bmatrix}.
\]
Here $\mathbf{1}_{d-1}$ denotes the column vector of length $d-1$
all of whose entries are equal to $1$.
For $d=1$, we define $B_1=(1)$.

\begin{lemma}
\label{lem:det-B}
For every integer $d\ge 1$,
\[
\det B_d=L_{2d}-2
\]
holds.
\end{lemma}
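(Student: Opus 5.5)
We compute $\det B_d$ by a Schur complement with respect to the block $A_{d-1}$, which is invertible by Lemma~\ref{lem:det-A} because $\det A_{d-1}=F_{2d}\neq 0$. The case $d=1$ is immediate: $\det B_1=1=L_2-2$. For $d\ge 2$ the Schur complement formula gives
\[
\det B_d=\det A_{d-1}\,\bigl(d-\mathbf 1_{d-1}^T A_{d-1}^{-1}\mathbf 1_{d-1}\bigr)
=F_{2d}\,d-\mathbf 1_{d-1}^T\operatorname{adj}(A_{d-1})\mathbf 1_{d-1}.
\]
So the task reduces to computing the sum $S_{n}$ of all entries of $\operatorname{adj}(A_n)$, with $n=d-1$.

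For the tridiagonal matrix $A_n$, the cofactor $(i,j)$-entry of the adjugate with $i\le j$ equals $\det A_{i-1}\det A_{n-j}$. The product of the $j-i$ off-diagonal entries $(-1)$ cancels the cofactor sign. Lemma~\ref{lem:det-A} then turns this into $F_{2i}F_{2(n-j)+2}$. Hence
\[
S_n=\sum_{i=1}^{n}F_{2i}F_{2n-2i+2}+2\sum_{1\le i<j\le n}F_{2i}F_{2n-2j+2}.
\]

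Rather than evaluating these convolution sums directly, I would avoid them with a cleaner route. Solve $A_n x=\mathbf 1$ explicitly. The equation $-x_{k-1}+3x_k-x_{k+1}=1$ has constant particular solution $x=1$. Using symmetric boundary values $x_0=x_{n+1}=0$, the general solution is $x_k=1-\alpha(\varphi^{2k}+\varphi^{2(n+1-k)})$, where $\varphi^2$ is a root of $t^2-3t+1$. In Fibonacci terms this becomes
\[
x_k=1-\frac{F_{2k}+F_{2(n+1-k)}}{F_{2n+2}}.
\]
One checks that this satisfies the recurrence, since each of $F_{2k}$ and $F_{2(n+1-k)}$ satisfies $u_{k+1}-3u_k+u_{k-1}=0$, and that it satisfies the boundary conditions, since $F_0=0$. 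Then $\mathbf 1^TA_n^{-1}\mathbf 1=\sum_k x_k$. Using $\sum_{k=1}^{n}F_{2k}=F_{2n+1}-1$ twice, we get
\[
\mathbf 1^TA_n^{-1}\mathbf 1=n-\frac{2(F_{2n+1}-1)}{F_{2n+2}}.
\]

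Substituting $n=d-1$ gives
\[
\det B_d=F_{2d}\Bigl(d-(d-1)+\frac{2(F_{2d-1}-1)}{F_{2d}}\Bigr)=F_{2d}+2F_{2d-1}-2.
\]
Finally, use $F_{2d}+2F_{2d-1}=F_{2d+1}+F_{2d-1}=L_{2d}$ to obtain $L_{2d}-2$.

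The main obstacle is verifying the closed form for $x=A_n^{-1}\mathbf 1$. The boundary check is easy because $F_0=0$. One must still confirm that $x_0=x_{n+1}=0$ is the right encoding of the first and last rows of $A_n$; it is, because those rows simply lack one neighbor. As a fallback, I would instead expand $\det B_d$ along the first row and cofactors to get a linear recurrence in $d$, namely $\det B_d=3\det B_{d-1}-\det B_{d-2}+2$ (the recurrence that $L_{2d}-2$ satisfies), and check the initial values $1,5$.
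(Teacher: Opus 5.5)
Your proof is correct, but it takes a different route from the paper.

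\textbf{The paper's route.} The paper does no computation. For $d\ge 3$ it observes that $B_d$ is exactly the reduced Laplacian of $W_{d+1}$ with one rim vertex deleted. The Matrix--Tree Theorem and the classical wheel formula (Theorem~\ref{thm:wheel-tree}) then give $\det B_d=\tau(W_{d+1})=L_{2d}-2$. The cases $d=1,2$ are checked by hand.

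\textbf{Your route.} You compute directly. The Schur complement reduces everything to $\mathbf 1^TA_{d-1}^{-1}\mathbf 1$. Your explicit solution $x_k=1-(F_{2k}+F_{2(n+1-k)})/F_{2n+2}$ of $A_nx=\mathbf 1$ does satisfy the interior recurrence and the boundary conditions $x_0=x_{n+1}=0$. Combined with $\sum_{k=1}^nF_{2k}=F_{2n+1}-1$ and $F_{2d}+2F_{2d-1}=L_{2d}$, this yields $L_{2d}-2$ uniformly for all $d\ge 2$.

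\textbf{What each buys.} The paper's argument is shorter but outsources all the work to the cited theorem. Yours is self-contained and gives more:
\begin{itemize}
\item It evaluates $\beta_d=\mathbf 1^TA_{d-1}^{-1}\mathbf 1=d-1-2(F_{2d-1}-1)/F_{2d}$ in closed form, so Lemma~\ref{lem:schur-block} follows immediately.
\item It removes the dependence on Theorem~\ref{thm:wheel-tree}. The paper later says that Theorem~\ref{thm:wheel-tree} follows from Theorem~\ref{thm:rim-roots}. Under the paper's proof of this lemma that claim is circular; under yours it becomes a genuine derivation.
\end{itemize}

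The adjugate/convolution digression and the recurrence fallback are never used, so you can drop them.
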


\begin{proof}
First, let $d\ge 3$.
The wheel graph $W_{d+1}$ has $d$ rim vertices and one central vertex.
If we delete from its Laplacian matrix the row and column corresponding
to one rim vertex, then the remaining $d-1$ rim vertices form a path-type
part, and the resulting principal submatrix coincides with $B_d$.

Therefore, by Theorem \ref{thm:matrix-tree} and Theorem \ref{thm:wheel-tree},
we have
\[
\det B_d
=
\tau(W_{d+1})
=
L_{2d}-2.
\]

When $d=1$, we have $\det B_1=1=L_2-2$.
When $d=2$, we have
\[
B_2=
\begin{bmatrix}
2 & -1\\
-1 & 3
\end{bmatrix}
\]
and hence
\[
\det B_2
=
6-1
=
5
=
L_4-2.
\]
Thus the assertion holds for every integer $d\ge 1$.
\end{proof}

We rewrite the preceding lemma in a form adapted to the Schur complement.
For $d\ge 1$, define $\beta_d$ by
\[
\beta_d=
\begin{cases}
0,
& d=1,\\[2mm]
\mathbf{1}_{d-1}^{T}
A_{d-1}^{-1}
\mathbf{1}_{d-1},
& d\ge 2
\end{cases}.
\]

\begin{lemma}
\label{lem:schur-block}
For every integer $d\ge 1$,
\[
d-\beta_d
=
\frac{L_{2d}-2}{F_{2d}}
\]
holds.
\end{lemma}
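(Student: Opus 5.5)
The plan is to use the Schur complement determinant identity on the block matrix $B_d$, together with Lemmas \ref{lem:det-A} and \ref{lem:det-B}.

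The case $d=1$ is immediate. Here $\beta_1=0$ by definition, so $d-\beta_d=1$. On the other side, $(L_2-2)/F_2=(3-2)/1=1$.

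For $d\ge 2$, the block $A_{d-1}$ is invertible, since by Lemma \ref{lem:det-A} its determinant is $F_{2d}\neq 0$. Apply the standard formula for the determinant of a block matrix with invertible lower-right block:
\[
\det B_d=\det A_{d-1}\cdot\bigl(d-\mathbf{1}_{d-1}^{T}A_{d-1}^{-1}\mathbf{1}_{d-1}\bigr)=\det A_{d-1}\,(d-\beta_d).
\]
Lemma \ref{lem:det-A} gives $\det A_{d-1}=F_{2d}$, and Lemma \ref{lem:det-B} gives $\det B_d=L_{2d}-2$. Dividing by $F_{2d}>0$ yields the claim.

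There is no real obstacle. The only points to check are that the Schur complement is taken with respect to the correct block, so that it is the scalar $d-\beta_d$, and that $A_{d-1}$ is nonsingular, which follows from the positivity of $F_{2d}$ for $d\ge1$.
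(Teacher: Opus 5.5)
Your proposal is correct and follows essentially the same route as the paper: you verify $d=1$ directly, and for $d\ge 2$ you take the Schur complement of $A_{d-1}$ in $B_d$ to get $\det B_d=\det A_{d-1}\,(d-\beta_d)$. You then conclude using Lemma \ref{lem:det-A} and Lemma \ref{lem:det-B}, dividing by $F_{2d}>0$.
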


\begin{proof}
When $d=1$, we have $\beta_1=0$, and hence $d-\beta_d=1$.
Therefore,
\[
d-\beta_1
=
1
=
\frac{3-2}{1}
=
\frac{L_2-2}{F_2},
\]
so the assertion holds.

Let $d\ge 2$.
By Lemma \ref{lem:det-A}, we have $\det A_{d-1}=F_{2d}>0$.
Thus $A_{d-1}$ is invertible.
Using the Schur complement with respect to $A_{d-1}$, we obtain
\[
\det B_d
=
\det A_{d-1}
\left(
d-
\mathbf{1}_{d-1}^{T}
A_{d-1}^{-1}
\mathbf{1}_{d-1}
\right).
\]
That is,
\[
\det B_d
=
\det A_{d-1}(d-\beta_d).
\]

Therefore, by Lemma \ref{lem:det-A} and Lemma \ref{lem:det-B}, we get
\[
\det A_{d-1}(d-\beta_d)
=
F_{2d}(d-\beta_d)
=
L_{2d}-2
=
\det B_d.
\]
Hence
\[
d-\beta_d
=
\frac{L_{2d}-2}{F_{2d}}
\]
holds.
\end{proof}

\begin{proof}[Proof of Theorem \ref{thm:rim-roots}]
By Theorem \ref{thm:all-minors}, $\mathcal F_{W_{N+1}}(R)$ is equal to
the determinant of the principal submatrix obtained from the Laplacian
matrix of $W_{N+1}$ by deleting all rows and columns corresponding to
vertices in $R$.

After deleting the root vertices $v_{a_1},v_{a_2},\ldots,v_{a_r}$,
the rim is divided into $r$ mutually disjoint path-type parts.
The cyclic interval $d_i$ contains $d_i-1$ rim vertices which do not belong
to the root set and lie between $v_{a_i}$ and $v_{a_{i+1}}$.
The principal submatrix corresponding to this part is $A_{d_i-1}$.
Here, even after deleting rows and columns from the Laplacian matrix,
the diagonal entries of the remaining vertices are still the degrees
in the original graph.
Hence each diagonal entry is $3$.

When $d_i=1$, the vertices $v_{a_i}$ and $v_{a_{i+1}}$ are adjacent,
and there is no remaining rim vertex between them.
In this case, the corresponding block is interpreted as the empty matrix
$A_0$.

Since the central vertex $c$ is not deleted, the principal submatrix
obtained by deleting the rows and columns corresponding to the root set $R$
has the following form, after omitting empty blocks:
\[
M=
\begin{bmatrix}
N
&
-\mathbf{1}_{d_1-1}^{T}
&
-\mathbf{1}_{d_2-1}^{T}
&
\cdots
&
-\mathbf{1}_{d_r-1}^{T}
\\
-\mathbf{1}_{d_1-1}
&
A_{d_1-1}
&
0
&
\cdots
&
0
\\
-\mathbf{1}_{d_2-1}
&
0
&
A_{d_2-1}
&
\cdots
&
0
\\
\vdots
&
\vdots
&
\vdots
&
\ddots
&
\vdots
\\
-\mathbf{1}_{d_r-1}
&
0
&
0
&
\cdots
&
A_{d_r-1}
\end{bmatrix}.
\]

Put
\[
I=\{i:1\le i\le r,\ d_i\ge 2\}.
\]

First, suppose that $I=\varnothing$.
Then $d_1=\cdots=d_r=1$, and hence $r=N$.
In this case, the principal submatrix is
\[
M=(N).
\]
Therefore,
\[
\det M=N
=
\sum_{i=1}^{N}(L_2-2)
=
\sum_{i=1}^{N}
(L_{2d_i}-2)
\prod_{\substack{1\le j\le N\\j\ne i}}F_{2d_j},
\]
and the assertion holds.
Hence we may assume that $I\ne\varnothing$.

For each $i\in I$, Lemma \ref{lem:det-A} gives
$\det A_{d_i-1}=F_{2d_i}>0$.
Thus $A_{d_i-1}$ is invertible.
Taking the Schur complement with respect to the direct sum of the nonempty
blocks
\[
\bigoplus_{i\in I}A_{d_i-1},
\]
we obtain
\[
\det M
=
\left(
\prod_{i\in I}\det A_{d_i-1}
\right)
\left(
N-\sum_{i\in I}
\mathbf{1}_{d_i-1}^{T}
A_{d_i-1}^{-1}
\mathbf{1}_{d_i-1}
\right).
\]

When $d_i=1$, we have
\[
\det A_0=1,
\qquad
\beta_1=0.
\]
Thus the above expression can be written as
\[
\det M
=
\left(
\prod_{i=1}^{r}\det A_{d_i-1}
\right)
\left(
N-\sum_{i=1}^{r}\beta_{d_i}
\right).
\]

Since the sum of the cyclic intervals is
\[
N=d_1+d_2+\cdots+d_r,
\]
we have
\[
N-\sum_{i=1}^{r}\beta_{d_i}
=
\sum_{i=1}^{r}(d_i-\beta_{d_i}).
\]

Therefore, by Lemma \ref{lem:det-A} and Lemma \ref{lem:schur-block},
\[
\begin{aligned}
\mathcal F_{W_{N+1}}(R)
&=
\det M\\
&=
\left(
\prod_{i=1}^{r}\det A_{d_i-1}
\right)
\left(
\sum_{i=1}^{r}(d_i-\beta_{d_i})
\right) \\
&=
\left(
\prod_{i=1}^{r}F_{2d_i}
\right)
\left(
\sum_{i=1}^{r}
\frac{L_{2d_i}-2}{F_{2d_i}}
\right)\\
&=
\sum_{i=1}^{r}
\left(L_{2d_i}-2\right)
\prod_{\substack{1\le j\le r\\j\ne i}}
F_{2d_j}.
\end{aligned}
\]

\end{proof}

By Theorem \ref{thm:identification-forest}, we also have
\[
\tau(W_{N+1}/R)
=
\mathcal F_{W_{N+1}}(R).
\]
Thus the formula also gives the number of spanning trees of the quotient
graph.

\begin{remark}
\label{rem:gap-multiset}
The right-hand side of Theorem \ref{thm:rim-roots} is a symmetric expression
in $d_1,\ldots,d_r$.
Therefore, the number of rooted spanning forests does not depend on the
order of the cyclic intervals, but only on their multiset.

In particular, even if two root sets have cyclic interval sequences which
are not cyclic permutations or reversals of each other, the corresponding
numbers of rooted spanning forests are equal whenever the two sequences
have the same multiset.
\end{remark}

When $r=1$, the unique cyclic interval is $d_1=N$.
Also, when there is only one root, a rooted spanning forest is just an
ordinary spanning tree.
Hence
\[
\mathcal F_{W_{N+1}}(\{v_a\})
=
\tau(W_{N+1}).
\]
Thus Theorem \ref{thm:wheel-tree} follows from Theorem \ref{thm:rim-roots}.

\begin{corollary}
\label{cor:r-one}
When $r=1$,
\[
\mathcal F_{W_{N+1}}(\{v_a\})
=
\tau(W_{N+1})
=
L_{2N}-2.
\]
\end{corollary}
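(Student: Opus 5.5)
This corollary is the specialization $r=1$ of Theorem \ref{thm:rim-roots}, so the plan is to substitute $r=1$ there and simplify.

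With a single root $R=\{v_a\}$, the definition of the cyclic intervals gives only the case $i=r=1$. Hence $d_1=a_1+N-a_1=N$.

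The second form of Theorem \ref{thm:rim-roots} is a sum over $i$ of $(L_{2d_i}-2)$ times a product over $j\ne i$. For $r=1$ the sum has the single term $i=1$. The product over $j\ne 1$ is empty and therefore equals $1$. This yields
\[
\mathcal F_{W_{N+1}}(\{v_a\})=L_{2N}-2 .
\]

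It remains to identify the left-hand side with $\tau(W_{N+1})$. When $R$ has exactly one vertex, a spanning forest in which every component contains exactly one root must be connected, so it is a spanning tree. Conversely, every spanning tree is such a forest. Thus $\mathcal F_{W_{N+1}}(\{v_a\})=\tau(W_{N+1})$. The same identification also follows from Theorem \ref{thm:identification-forest}, since $W_{N+1}/\{v_a\}=W_{N+1}$.

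There is one point of care. Lemma \ref{lem:det-B}, which enters the proof of Theorem \ref{thm:rim-roots} through Lemma \ref{lem:schur-block}, was itself derived from Theorem \ref{thm:wheel-tree}. So this argument confirms consistency rather than giving an independent proof of the classical formula. If independence is wanted, the plan is to prove $\det B_d=L_{2d}-2$ directly. Expanding $\det B_d$ along its first row and column expresses it through $\det A_{d-1}=F_{2d}$ and cofactors of $A_{d-1}$. Those cofactors are products $F_{2k}F_{2(d-1-k)}$, which come from determinants of tridiagonal blocks via Lemma \ref{lem:det-A}. The resulting Fibonacci sum should reduce to $L_{2d}-2$ by standard identities, and I expect this simplification to be the only nontrivial step.
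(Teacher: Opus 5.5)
Your proposal is correct and matches the paper's proof. Set $r=1$ so that $d_1=N$, read off $L_{2N}-2$ from Theorem \ref{thm:rim-roots} with an empty product, and note that a rooted spanning forest with a single root is exactly a spanning tree. Your caveat is also accurate: for $r=1$ the matrix $M$ in that proof is exactly $B_N$, and Lemma \ref{lem:det-B} was proved via Theorem \ref{thm:wheel-tree}, so this corollary is only a consistency check rather than an independent derivation of the classical formula, despite the paper's remark that Theorem \ref{thm:wheel-tree} ``follows from'' Theorem \ref{thm:rim-roots}.
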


\begin{proof}
When $r=1$, the unique cyclic interval is
\[
d_1=N.
\]
Therefore, by Theorem \ref{thm:rim-roots}, we obtain
\[
\mathcal F_{W_{N+1}}(\{v_a\})
=
L_{2N}-2.
\]
Moreover, when there is only one root, a rooted spanning forest is just
an ordinary spanning tree, and hence
\[
\mathcal F_{W_{N+1}}(\{v_a\})
=
\tau(W_{N+1}).
\]
\end{proof}

Next, let $r=2$, and let the two cyclic intervals determined by the two
rim root vertices be $d$ and $N-d$.
That is, in Theorem \ref{thm:rim-roots}, we put
\[
d_1=d,\qquad d_2=N-d.
\]
Then we obtain the following corollary.

\begin{corollary}
\label{cor:r-two}
\[
\begin{aligned}
\mathcal F_{W_{N+1}}(\{v_a,v_b\})
&=\tau(W_{N+1}/\{v_a,v_b\}) \\
&=\left(L_{2d}-2\right)F_{2(N-d)}+\left(L_{2(N-d)}-2\right)F_{2d}
\end{aligned}
\]
holds.
\end{corollary}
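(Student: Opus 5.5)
This corollary is the case $r=2$ of Theorem \ref{thm:rim-roots}, and I plan to prove it by direct substitution, using Theorem \ref{thm:identification-forest} for the first equality. Write the two rim roots as $v_a,v_b$ with $1\le a<b\le N$, so $a_1=a$, $a_2=b$. The cyclic intervals are then $d_1=b-a$ and $d_2=a+N-b=N-d_1$. Put $d=d_1$, so $1\le d\le N-1$ and both intervals are positive integers. This is exactly the setting of Theorem \ref{thm:rim-roots} with $r=2$.

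\textbf{Step 1.} Since $R=\{v_a,v_b\}$ is a nonempty subset of the connected graph $W_{N+1}$, Theorem \ref{thm:identification-forest} gives $\tau(W_{N+1}/\{v_a,v_b\})=\mathcal F_{W_{N+1}}(\{v_a,v_b\})$. This is the first equality.

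\textbf{Step 2.} Apply the second expression in Theorem \ref{thm:rim-roots} with $r=2$. The sum has two terms, and for each $i$ the product over $j\ne i$ is a single factor. This gives
\[
\mathcal F_{W_{N+1}}(\{v_a,v_b\})=(L_{2d_1}-2)F_{2d_2}+(L_{2d_2}-2)F_{2d_1}.
\]
Substituting $d_1=d$ and $d_2=N-d$ yields the stated formula.

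\textbf{Step 3.} Two points need checking. First, the expression is symmetric under $d\leftrightarrow N-d$, so it does not matter which of the two intervals is called $d$; this matches Remark \ref{rem:gap-multiset}. Second, the boundary case $d=1$ (or $N-d=1$), where the roots are adjacent, needs no separate argument. Theorem \ref{thm:rim-roots} already covers it through the conventions $A_0$ and $\beta_1=0$, and there $L_2-2=1$ and $F_2=1$.

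I expect no real obstacle. The only care needed is to get the cyclic-interval bookkeeping right, namely that $d_2=N-d_1$.
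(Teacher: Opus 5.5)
Your proposal is correct and takes the same route as the paper: you specialize Theorem~\ref{thm:rim-roots} to $r=2$ with $d_1=d$, $d_2=N-d$, and you get the equality with $\tau(W_{N+1}/\{v_a,v_b\})$ from Theorem~\ref{thm:identification-forest}. Your checks on the symmetry $d\leftrightarrow N-d$ and on the adjacent-root case $d=1$ are accurate, but the general theorem already covers both.
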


\begin{remark}
\label{rem:two-rim-previous}
Corollary \ref{cor:r-two} gives the case where two rim vertices are
identified into one vertex.
It agrees with the formula of Tamura and Tanaka
\cite[Theorem 5]{TamuraTanaka2026}.
Therefore, Theorem \ref{thm:rim-roots} can be regarded as an extension of
the known formula for two identified vertices to the case where an arbitrary
number of rim vertices are identified.
\end{remark}

Furthermore, if the rim vertices are chosen as roots so that all cyclic
intervals are equal, then Theorem \ref{thm:rim-roots} gives the following
corollary.

\begin{corollary}
\label{cor:equally-spaced}
Let $N=rq$, and choose $r$ rim vertices at equal intervals.
That is, assume that
\[
d_1=d_2=\cdots=d_r=q.
\]
Then
\[
\mathcal F_{W_{N+1}}(R)
=\tau(W_{N+1}/R)
=r\left(L_{2q}-2\right)F_{2q}^{\,r-1}
\]
holds.
\end{corollary}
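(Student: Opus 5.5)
This is a direct specialization of Theorem \ref{thm:rim-roots}, combined with Theorem \ref{thm:identification-forest}. The plan is to substitute the equal intervals into the formula and simplify the resulting sum.

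The first step is to note that the hypotheses are consistent. Choosing $r$ rim vertices at equal spacing $q$ means taking $a_i = a_1 + (i-1)q$ for $1\le i\le r$. With $N=rq$, all of these indices lie in $\{1,\dots,N\}$ as long as $a_1\le q$. The definition of the cyclic intervals then gives $d_i=q$ for $1\le i\le r-1$, and
\[
d_r=a_1+N-a_r=a_1+rq-(a_1+(r-1)q)=q .
\]
In any case the statement assumes $d_1=\cdots=d_r=q$ directly. Since $1\le r\le N$, Theorem \ref{thm:rim-roots} applies.

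Next, I will use the second form of Theorem \ref{thm:rim-roots}:
\[
\mathcal F_{W_{N+1}}(R)=\sum_{i=1}^{r}(L_{2d_i}-2)\prod_{j\ne i}F_{2d_j}.
\]
After substituting $d_j=q$, each of the $r$ summands equals $(L_{2q}-2)F_{2q}^{\,r-1}$, since the product runs over exactly $r-1$ indices. The sum therefore equals $r(L_{2q}-2)F_{2q}^{\,r-1}$.

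The edge cases need no separate treatment:
\begin{itemize}
\item When $r=1$, the empty product is $1$, and the formula recovers Corollary \ref{cor:r-one}.
\item When $q=1$, we have $F_2=1$ and $L_2-2=1$, so the formula gives $N$. This matches the case $I=\varnothing$ in the proof of Theorem \ref{thm:rim-roots}.
\end{itemize}

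Finally, Theorem \ref{thm:identification-forest} gives $\tau(W_{N+1}/R)=\mathcal F_{W_{N+1}}(R)$, which is the remaining equality. There is no real obstacle here. The only point requiring care is confirming that the spacing assumption really forces every $d_i$ to equal $q$, including the wrap-around interval $d_r$, and the computation above checks exactly that.
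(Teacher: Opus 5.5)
Your proposal is correct and matches the paper's proof: you substitute $d_1=\cdots=d_r=q$ into the sum form of Theorem \ref{thm:rim-roots} to get $r(L_{2q}-2)F_{2q}^{\,r-1}$, then invoke Theorem \ref{thm:identification-forest} for the quotient-graph equality. Your extra checks of the wrap-around interval $d_r$ and of the cases $r=1$ and $q=1$ are harmless additions.
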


\begin{proof}
By applying Theorem \ref{thm:rim-roots} under the assumption that all cyclic
intervals are equal to $q$, we obtain
\[
\begin{aligned}
\mathcal F_{W_{N+1}}(R)
&=
\sum_{i=1}^{r}
\left(L_{2q}-2\right)
F_{2q}^{\,r-1}\\
&=
r\left(L_{2q}-2\right)
F_{2q}^{\,r-1}.
\end{aligned}
\]
The formula for the number of spanning trees of the quotient graph follows
from Theorem \ref{thm:identification-forest}.
\end{proof}

\subsection{The case where the central vertex is included in the root set}
\label{sec:center-root}
When the central vertex is included in the root set, we obtain a product
formula which is even simpler than Theorem \ref{thm:rim-roots}.
Indeed, if the central vertex is included in the root set, the corresponding
principal submatrix of the Laplacian matrix decomposes into a direct sum
of blocks corresponding to the cyclic intervals on the rim.

\begin{proof}[Proof of Theorem \ref{thm:center-root}]
By Theorem \ref{thm:all-minors},
$\mathcal F_{W_{N+1}}(\{c\}\cup R)$ is equal to the determinant of the
principal submatrix obtained from the Laplacian matrix of $W_{N+1}$ by
deleting all rows and columns corresponding to $c$ and to the vertices
in $R$.

After deleting the row and column corresponding to the central vertex $c$,
the remaining off-diagonal entries correspond only to adjacencies on the rim.
Furthermore, after deleting the rows and columns corresponding to the root
vertices $v_{a_1},v_{a_2},\ldots,v_{a_r}$, the remaining rim vertices are
divided into $r$ mutually disjoint path-type parts.

The cyclic interval $d_i$ contains $d_i-1$ rim vertices which do not belong
to the root set and lie between $v_{a_i}$ and $v_{a_{i+1}}$.
In a principal submatrix of the Laplacian matrix, the diagonal entries of
the remaining vertices are still the degrees in the original graph.
Since each rim vertex of a wheel graph has degree $3$, and the off-diagonal
entries corresponding to adjacent rim vertices are $-1$, the block
corresponding to the cyclic interval $d_i$ coincides with $A_{d_i-1}$.
Therefore, the corresponding principal submatrix is
\[
A_{d_1-1}
\oplus
A_{d_2-1}
\oplus
\cdots
\oplus
A_{d_r-1}.
\]
When $d_i=1$, there is no remaining rim vertex between $v_{a_i}$ and
$v_{a_{i+1}}$, and the corresponding block is interpreted as the empty
matrix $A_0$.
In this case, $\det A_0=1$.

Hence, by Lemma \ref{lem:det-A}, we obtain
\[
\begin{aligned}
\mathcal F_{W_{N+1}}(\{c\}\cup R)
&=
\det
\left(
A_{d_1-1}
\oplus
A_{d_2-1}
\oplus
\cdots
\oplus
A_{d_r-1}
\right)\\
&=
\prod_{i=1}^{r}\det A_{d_i-1}\\
&=
\prod_{i=1}^{r}F_{2d_i}.
\end{aligned}
\]

Finally, by Theorem \ref{thm:identification-forest}, we have
\[
\tau\bigl(W_{N+1}/(\{c\}\cup R)\bigr)
=
\mathcal F_{W_{N+1}}(\{c\}\cup R).
\]
Thus the formula for the number of spanning trees of the quotient graph
also follows.

\end{proof}

\begin{remark}
\label{rem:rim-center-relation}
Comparing Theorem \ref{thm:rim-roots} and Theorem \ref{thm:center-root},
we obtain
\[
\mathcal F_{W_{N+1}}(R)
=
\mathcal F_{W_{N+1}}(\{c\}\cup R)
\sum_{i=1}^{r}
\frac{L_{2d_i}-2}{F_{2d_i}}.
\]
That is, the product
\[
\prod_{i=1}^{r}F_{2d_i}
\]
which appears when the central vertex is included in the root set also
appears as a common factor in the formula for the case where the central
vertex is not included.
The latter is obtained by multiplying this product by the sum of the
contributions
\[
\frac{L_{2d_i}-2}{F_{2d_i}}
\]
from the cyclic intervals.
\end{remark}

When $r=1$, the unique cyclic interval is $d_1=N$.
Thus Theorem \ref{thm:center-root} gives the following corollary.

\begin{corollary}
\label{cor:center-one-rim}
When $r=1$,
\[
\mathcal F_{W_{N+1}}(\{c,v_a\})
=\tau(W_{N+1}/\{c,v_a\})
=F_{2N}
\]
holds.
\end{corollary}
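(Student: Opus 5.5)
This is a direct specialization of Theorem \ref{thm:center-root} to $r=1$, followed by an application of Theorem \ref{thm:identification-forest}.

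First, take $R=\{v_a\}$ with a single rim vertex. The cyclic interval sequence then consists of the single entry $d_1=a+N-a=N$, by the definition of $d_r$ with $r=1$. Theorem \ref{thm:center-root} gives
\[
\mathcal F_{W_{N+1}}(\{c,v_a\})=\prod_{i=1}^{1}F_{2d_i}=F_{2N}.
\]

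Second, Theorem \ref{thm:identification-forest}, applied with root set $\{c,v_a\}$, gives
\[
\tau(W_{N+1}/\{c,v_a\})=\mathcal F_{W_{N+1}}(\{c,v_a\}).
\]
Combining the two displays yields the corollary.

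The only point needing care is that the proof of Theorem \ref{thm:center-root} still applies when $r=1$. After deleting $c$ and $v_a$, the remaining rim vertices $v_{a+1},\ldots,v_{a+N-1}$ (indices taken modulo $N$) form a single path of $N-1$ vertices. This path is not closed into a cycle, because the only rim edges that would close it are incident to the deleted vertex $v_a$. The principal submatrix is therefore exactly $A_{N-1}$, and Lemma \ref{lem:det-A} gives $\det A_{N-1}=F_{2N}$.

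As an independent check, $W_{N+1}/\{c,v_a\}$ is the fan $\operatorname{Fan}_{N}$ on the path $P_{N-1}$, with doubled edges between the merged vertex and the two path endpoints. This is consistent with Hilton's formula (Theorem \ref{thm:fan-tree}) once those extra parallel edges are accounted for, but the direct argument above does not need this check.
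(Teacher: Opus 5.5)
Your proposal is correct and follows the paper's route: set $r=1$ (so $d_1=N$) in Theorem~\ref{thm:center-root} to get $F_{2N}$, then use Theorem~\ref{thm:identification-forest} for the quotient graph. Your check that the principal submatrix is exactly $A_{N-1}$ is a sound verification of the $r=1$ case. The fan-graph remark is only loose, and as you say, it is not needed.
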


Next, let $r=2$, and let the two cyclic intervals determined by the two
rim root vertices be $d$ and $N-d$.
That is, in Theorem \ref{thm:center-root}, we put
\[
d_1=d,\qquad d_2=N-d.
\]
Then we obtain the following corollary.

\begin{corollary}
\label{cor:center-two-rim}
\[
\mathcal F_{W_{N+1}}(\{c,v_a,v_b\})
=\tau(W_{N+1}/\{c,v_a,v_b\})
=F_{2d}F_{2(N-d)}
\]
holds.
\end{corollary}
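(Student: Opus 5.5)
This is the case $r=2$ of Theorem \ref{thm:center-root}, together with Theorem \ref{thm:identification-forest}. The plan is to specialize both results and check that the notation matches.

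First, fix two distinct rim vertices $v_a,v_b$ with $a<b$. In the notation of Section \ref{sec:preliminaries}, put $a_1=a$ and $a_2=b$. The cyclic intervals are then
\[
d_1=b-a,\qquad d_2=a+N-b=N-d_1 .
\]
So setting $d=d_1$ gives exactly the pair $(d,N-d)$ of the statement. Both intervals are positive integers with $1\le d\le N-1$, so every Fibonacci index that appears is a positive even integer. Swapping the roles of $v_a$ and $v_b$ only exchanges $d$ and $N-d$. The right-hand side $F_{2d}F_{2(N-d)}$ is symmetric in these two values, so the formula does not depend on which interval is called $d$.

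Next, apply Theorem \ref{thm:center-root} with $r=2$. It gives
\[
\mathcal F_{W_{N+1}}(\{c,v_a,v_b\})=F_{2d_1}F_{2d_2}=F_{2d}F_{2(N-d)} .
\]
This is the second equality of the statement. Then apply Theorem \ref{thm:identification-forest} to $G=W_{N+1}$ and $R=\{c,v_a,v_b\}$. It gives $\tau(W_{N+1}/\{c,v_a,v_b\})=\mathcal F_{W_{N+1}}(\{c,v_a,v_b\})$, which is the first equality.

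There is no real obstacle. The only case needing a glance is adjacent roots, $d=1$ or $N-d=1$. There the corresponding block is the empty matrix $A_0$, and $\det A_0=1=F_2$, which the proof of Theorem \ref{thm:center-root} already handles. As a sanity check, take $N=3$ and $d=1$: the formula gives $F_2F_4=3$. Directly, the single remaining rim vertex has degree $3$ and must attach by exactly one of its three edges to one of the three roots, so there are $3$ forests, in agreement.
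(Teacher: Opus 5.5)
Your proposal is correct and follows the paper's own route: the paper obtains this corollary by specializing Theorem \ref{thm:center-root} to $r=2$ with $d_1=d$, $d_2=N-d$, and it gets the quotient-graph equality from Theorem \ref{thm:identification-forest}, as in the final step of the proof of Theorem \ref{thm:center-root}. Your extra checks (symmetry under $d\leftrightarrow N-d$, the adjacent-root case, and the $N=3$ example) are all accurate, though not needed for the argument.
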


Furthermore, if the rim vertices are chosen as roots so that all cyclic
intervals are equal, then Theorem \ref{thm:center-root} gives the following
corollary.

\begin{corollary}
\label{cor:center-equally-spaced}
Let $N=rq$, and choose $r$ rim vertices at equal intervals.
That is, assume that
\[
d_1=d_2=\cdots=d_r=q.
\]
Then
\[
\mathcal F_{W_{N+1}}(\{c\}\cup R)
=
\tau\bigl(W_{N+1}/(\{c\}\cup R)\bigr)
=
F_{2q}^{\,r}
\]
holds.
\end{corollary}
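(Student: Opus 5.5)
The statement is Corollary \ref{cor:center-equally-spaced}. It is a direct specialization of Theorem \ref{thm:center-root}, together with Theorem \ref{thm:identification-forest} for the quotient-graph interpretation.

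First, I check that the hypotheses of Theorem \ref{thm:center-root} are met. Since $N=rq$ and the $r$ roots are chosen at equal intervals, the cyclic interval sequence is $d_1=\cdots=d_r=q$. This is consistent with $d_1+\cdots+d_r=N$. Each $d_i\ge1$ holds automatically, because $1\le r\le N$ forces $q\ge1$.

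Next, I substitute into the product formula of Theorem \ref{thm:center-root}:
\[
\mathcal F_{W_{N+1}}(\{c\}\cup R)=\prod_{i=1}^{r}F_{2d_i}=\prod_{i=1}^{r}F_{2q}=F_{2q}^{\,r}.
\]

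Finally, I apply Theorem \ref{thm:identification-forest} with root set $\{c\}\cup R$. This gives $\tau\bigl(W_{N+1}/(\{c\}\cup R)\bigr)=\mathcal F_{W_{N+1}}(\{c\}\cup R)$, which completes the chain of equalities.

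There is no real obstacle here. The only point worth stating explicitly is the edge case $q=1$, i.e.\ $r=N$. There every block $A_{d_i-1}$ is the empty matrix with determinant $1=F_2$, so the formula gives $F_2^{\,N}=1$. This agrees with $\mathcal F_G(V(G))=1$, and it is already covered by the convention $\det A_0=1$ used in the proof of Theorem \ref{thm:center-root}.
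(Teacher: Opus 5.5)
Your proposal is correct and follows the paper's proof exactly: you substitute $d_1=\cdots=d_r=q$ into the product formula of Theorem~\ref{thm:center-root} to get $F_{2q}^{\,r}$, then invoke Theorem~\ref{thm:identification-forest} for the quotient-graph equality. Your remark on the case $q=1$ (so $r=N$ and $\{c\}\cup R=V(W_{N+1})$) is a correct consistency check that the paper's proof does not spell out.
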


\begin{proof}
By Theorem \ref{thm:center-root}, we have
\[
\mathcal F_{W_{N+1}}(\{c\}\cup R)
=
\prod_{i=1}^{r}F_{2q}
=
F_{2q}^{\,r}.
\]
The formula for the number of spanning trees of the quotient graph follows
from Theorem \ref{thm:identification-forest}.
\end{proof}

\begin{remark}
\label{rem:center-fan}
The product formula in Theorem \ref{thm:center-root} shows that, when the
central vertex is included in the root set, each cyclic interval on the rim
contributes independently to the determinant.

Indeed, the contribution from the $i$-th interval whose cyclic length is
$d_i$ is
\[
\det A_{d_i-1}=F_{2d_i}.
\]
On the other hand, by Theorem \ref{thm:fan-tree}, we have
\[
\tau(\operatorname{Fan}_{d_i+1})
=
F_{2d_i}.
\]
Therefore, the factor arising from each cyclic interval agrees with the
number of spanning trees of the fan graph $\operatorname{Fan}_{d_i+1}$.
In this sense, the number of rooted spanning forests when the central
vertex is included in the root set can be understood as the product of
fan-graph-type contributions corresponding to the cyclic intervals.
\end{remark}

\subsection{Matchings and rooted spanning forests}
\label{sec:matching}

The fan-graph interpretation in the preceding subsection can be further
rewritten in terms of graph matchings.
Let $m(P_n)$ denote the number of matchings of the path graph $P_n$.
From the bijection of Benjamin and Yerger \cite{BY2006} between matchings
of $P_{2n-1}$ and spanning trees of $\operatorname{Fan}_{n+1}$, we have
\[
m(P_{2n-1})
=
\tau(\operatorname{Fan}_{n+1})
=
F_{2n}.
\]
Therefore, by Theorem \ref{thm:center-root}, we obtain the following
corollary.

\begin{corollary}
\label{cor:matching-center}
\[
\mathcal F_{W_{N+1}}(\{c\}\cup R)
=
\prod_{i=1}^{r}m(P_{2d_i-1}).
\]
\end{corollary}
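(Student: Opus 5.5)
The identity in Corollary \ref{cor:matching-center} is a direct combination of Theorem \ref{thm:center-root} with the identity $m(P_{2n-1})=F_{2n}$ quoted just before the statement. I would begin from Theorem \ref{thm:center-root}, which gives
\[
\mathcal F_{W_{N+1}}(\{c\}\cup R)=\prod_{i=1}^{r}F_{2d_i},
\]
where $d_1,\ldots,d_r$ are the cyclic intervals of $R$. Each $d_i$ is a positive integer, so each factor can be rewritten separately.

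For each $i$, I would set $n=d_i\ge1$ in the relation $m(P_{2n-1})=\tau(\operatorname{Fan}_{n+1})=F_{2n}$. This comes from Theorem \ref{thm:fan-tree} together with the Benjamin--Yerger bijection. It gives $F_{2d_i}=m(P_{2d_i-1})$, and substituting into the product proves the corollary.

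The only delicate point is the boundary case $d_i=1$, where the path is $P_1$ and the fan is $\operatorname{Fan}_2$. Here I would check directly that the identity still holds. The graph $P_1$ is a single vertex, so its only matching is the empty one and $m(P_1)=1$. This equals $F_2=1$ and also $\tau(\operatorname{Fan}_2)=1$, since $\operatorname{Fan}_2$ is a single edge. Theorem \ref{thm:fan-tree} already covers $n\ge1$, so no further work is needed.

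If one prefers not to rely on the bijection, there is a self-contained alternative. I would prove $m(P_{2n-1})=F_{2n}$ from the standard recurrence $m(P_k)=m(P_{k-1})+m(P_{k-2})$, with $m(P_0)=1$ and $m(P_1)=1$. This recurrence gives $m(P_k)=F_{k+1}$, and setting $k=2n-1$ yields $F_{2n}$.
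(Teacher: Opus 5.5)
Your proposal is correct and takes the same approach as the paper: you substitute $F_{2d_i}=m(P_{2d_i-1})$, from the Benjamin--Yerger identity $m(P_{2n-1})=\tau(\operatorname{Fan}_{n+1})=F_{2n}$, into the product formula of Theorem~\ref{thm:center-root}. Your check of the case $d_i=1$ and your recurrence derivation of $m(P_k)=F_{k+1}$ are valid additions that make the argument independent of the cited bijection.
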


On the other hand, since
\[
m^*(C_{2d})=L_{2d}-2,
\]
Theorem \ref{thm:rim-roots} gives the following corollary.

\begin{corollary}
\label{cor:matching-rim}
\[
\mathcal F_{W_{N+1}}(R)
=
\sum_{i=1}^{r}
m^*(C_{2d_i})
\prod_{\substack{1\le j\le r\\j\ne i}}
m(P_{2d_j-1}).
\]
\end{corollary}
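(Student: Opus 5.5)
This is a direct substitution into Theorem \ref{thm:rim-roots}, using the two matching identities already recorded in this subsection. The plan is to rewrite each factor of that formula in matching language, term by term.

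First, fix $d\ge1$. For $d\ge2$, the identity of Benjamin and Yerger \cite{BY2006} gives $m^*(C_{2d})=L_{2d}-2$. For $d=1$, we use the convention $m^*(C_2)=1=L_2-2$ fixed in the preliminaries. Hence $m^*(C_{2d_i})=L_{2d_i}-2$ for every cyclic interval $d_i\ge1$.

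Second, we need $m(P_{2d-1})=F_{2d}$ for every $d\ge1$. This follows from the stated consequence of the bijection in \cite{BY2006} together with Theorem \ref{thm:fan-tree}, exactly as used in Corollary \ref{cor:matching-center}. The edge case $d=1$ deserves a separate check, since $\operatorname{Fan}_2$ is a single edge. Here $P_1$ is a single vertex, so $m(P_1)=1$ (only the empty matching), and this equals $F_2=1$. A direct check of $m(P_{2n-1})=F_{2n}$ is also available if desired: matchings of $P_n$ satisfy $m(P_n)=m(P_{n-1})+m(P_{n-2})$ with $m(P_0)=m(P_1)=1$, so $m(P_n)=F_{n+1}$.

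Finally, we substitute both identities into the second expression of Theorem \ref{thm:rim-roots}:
\[
\mathcal F_{W_{N+1}}(R)=\sum_{i=1}^{r}(L_{2d_i}-2)\prod_{j\ne i}F_{2d_j}=\sum_{i=1}^{r}m^*(C_{2d_i})\prod_{j\ne i}m(P_{2d_j-1}).
\]
The only point needing care is the degenerate interval $d_i=1$. It is covered by the $C_2$ convention and by $m(P_1)=1$, so there is no real obstacle.
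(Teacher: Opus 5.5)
Your proposal is correct and matches the paper's argument: substitute $m^*(C_{2d})=L_{2d}-2$ (using the $C_2$ convention when $d=1$) and $m(P_{2d-1})=F_{2d}$ into the second expression of Theorem~\ref{thm:rim-roots}. Your explicit check of the degenerate interval $d_i=1$ and of $m(P_n)=F_{n+1}$ via the recurrence is a bit more careful than the paper's one-line derivation.
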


Thus Theorem \ref{thm:center-root} and Theorem \ref{thm:rim-roots} have
combinatorial representations in terms of the numbers of matchings of
path graphs and cycle graphs.

\section{Summation formulas over root sets}
\label{sec:sum-formulas}

In the previous section, we gave explicit formulas for rooted spanning
forests for an arbitrary fixed root set.

In this section, for the wheel graph $W_{N+1}$, we fix the number
$r$ of rim root vertices, where $1\le r\le N$, and compute the sum of
the numbers of rooted spanning forests over all rim root sets of size $r$.

Let
\[
V_{\mathrm{rim}}
=
\{v_1,v_2,\ldots,v_N\}
\]
be the set of rim vertices.
For a formal power series $f(z)$, we denote by $[z^n]f(z)$ the coefficient
of $z^n$ in $f(z)$.

We define $A_{N,r}$ to be the sum of the numbers of rooted spanning forests
with $r$ rim vertices as roots, taken over all rim root sets of size $r$.
We also define $B_{N,r}$ to be the sum of the numbers of rooted spanning
forests with the central vertex $c$ and $r$ rim vertices as roots, taken
over all rim root sets of size $r$.
That is,
\[
A_{N,r}
=\sum_{\substack{R\subseteq V_{\mathrm{rim}}\\|R|=r}}\mathcal F_{W_{N+1}}(R), \qquad
B_{N,r}
=\sum_{\substack{R\subseteq V_{\mathrm{rim}}\\|R|=r}}\mathcal F_{W_{N+1}}(\{c\}\cup R).
\]

First, we compute two generating functions which will be used in the proof
of Theorem \ref{thm:sum-formulas}.
\begin{lemma}
\label{lem:generating-functions}

\[
\sum_{d\ge 1}F_{2d}z^d
=
\frac{z}{1-3z+z^2}, \qquad
\sum_{d\ge 1}(L_{2d}-2)z^d
=\frac{z(1+z)}{(1-z)(1-3z+z^2)}.
\]
\end{lemma}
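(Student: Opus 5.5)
We prove both identities by recurrences and generating functions, using only the Fibonacci/Lucas recurrences fixed in Section \ref{sec:preliminaries}. For the first series, put $G(z)=\sum_{d\ge1}F_{2d}z^d$. The proof of Lemma \ref{lem:det-A} already records the recurrence $F_{2n+2}=3F_{2n}-F_{2n-2}$, which we reuse. With $F_0=0$ and $F_2=1$, multiplying $G(z)$ by $1-3z+z^2$ kills all coefficients of $z^d$ for $d\ge2$. This is exactly the relation $F_{2d}-3F_{2d-2}+F_{2d-4}=0$ for $d\ge2$, where the $d=2$ case uses $F_0=0$. The coefficient of $z^1$ is $F_2=1$, so $(1-3z+z^2)G(z)=z$. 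If desired, the recurrence itself follows from $F_{m+2}=F_{m+1}+F_m$ applied twice: $F_{2n+2}=F_{2n+1}+F_{2n}=2F_{2n}+F_{2n-1}=3F_{2n}-F_{2n-2}$.

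For the second series, we first observe that $L_{2d}$ satisfies the same recurrence $L_{2d+2}=3L_{2d}-L_{2d-2}$, proved by the identical two-step computation. Thus $H(z)=\sum_{d\ge1}L_{2d}z^d$ satisfies
\[
(1-3z+z^2)H(z)=L_2z+(L_4-3L_2)z^2+\bigl(-z\cdot L_0\cdot z\bigr)\text{-type boundary terms},
\]
and it is cleaner to handle the boundary terms explicitly. Starting the recurrence from $L_0=2$, we have $\sum_{d\ge0}L_{2d}z^d=\frac{2-3z}{1-3z+z^2}$, checked using $L_0=2$ and $L_2=3$. Hence $H(z)=\frac{2-3z}{1-3z+z^2}-2=\frac{z-2z^2}{1-3z+z^2}$.

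Next we subtract $\sum_{d\ge1}2z^d=\frac{2z}{1-z}$. The combined expression is
\[
\frac{(z-2z^2)(1-z)-2z(1-3z+z^2)}{(1-z)(1-3z+z^2)},
\]
whose numerator simplifies to $z-3z^2+2z^3-2z+6z^2-2z^3=-z+3z^2$. This does not match the target, which signals an arithmetic slip at exactly this step. The constant-term bookkeeping is where care is needed, so we recheck $\sum_{d\ge0}L_{2d}z^d$. Its numerator is $L_0+(L_2-3L_0)z=2+(3-6)z=2-3z$, which is correct. Then $H=\frac{2-3z-2+6z-2z^2}{1-3z+z^2}=\frac{3z-2z^2}{1-3z+z^2}$; earlier we forgot to multiply the $2$ by the denominator.

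Redoing the final subtraction with the corrected $H$, the numerator becomes $(3z-2z^2)(1-z)-2z+6z^2-2z^3=3z-5z^2+2z^3-2z+6z^2-2z^3=z+z^2$. This yields $\frac{z(1+z)}{(1-z)(1-3z+z^2)}$, as claimed. We expect the main obstacle to be only this boundary-term bookkeeping. As a safeguard, we check the first coefficients against Lemma \ref{lem:det-B}: the series should begin $1,5,16,\dots$ (the values $L_2-2$, $L_4-2$, $L_6-2$), and indeed $\frac{z+z^2}{1-4z+4z^2-z^3}=z+5z^2+16z^3+\cdots$.

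An alternative for the second identity would derive it from the first via $L_{2d}-2=F_{2d+2}-F_{2d-2}-2$, summing termwise. We will use whichever presentation is shorter.
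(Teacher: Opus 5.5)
Your proof is correct and follows essentially the same route as the paper. Both arguments use the recurrence $x_{2d}=3x_{2d-2}-x_{2d-4}$ for even-indexed Fibonacci and Lucas numbers to get $\sum_{d\ge1}F_{2d}z^d=z/(1-3z+z^2)$ and $\sum_{d\ge1}L_{2d}z^d=(3z-2z^2)/(1-3z+z^2)$, and then subtract $2z/(1-z)$; you just spell out the boundary terms that the paper calls a ``standard generating-function computation.'' In a final write-up, delete the garbled ``boundary terms'' display and the abandoned $(z-2z^2)$ computation, since only the corrected chain $\frac{2-3z}{1-3z+z^2}-2=\frac{3z-2z^2}{1-3z+z^2}$ is needed.
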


\begin{proof}
The even-indexed Fibonacci numbers satisfy
\[
F_{2d}
=
3F_{2d-2}-F_{2d-4}
\qquad
(d\ge 3).
\]
Here $F_2=1$ and $F_4=3$.
Thus, by a standard generating-function computation, we obtain
\[
\sum_{d\ge 1}F_{2d}z^d
=
\frac{z}{1-3z+z^2}.
\]

Similarly, the even-indexed Lucas numbers satisfy
\[
L_{2d}
=
3L_{2d-2}-L_{2d-4}
\qquad
(d\ge 3).
\]
Here $L_2=3$ and $L_4=7$.

Therefore,
\[
\sum_{d\ge 1}L_{2d}z^d
=
\frac{3z-2z^2}{1-3z+z^2}
\]
holds.
Since
\[
\sum_{d\ge 1}2z^d
=
\frac{2z}{1-z},
\]
we have
\[
\begin{aligned}
\sum_{d\ge 1}(L_{2d}-2)z^d
&=
\frac{3z-2z^2}{1-3z+z^2}
-
\frac{2z}{1-z}\\
&=
\frac{z(1+z)}
{(1-z)(1-3z+z^2)}.
\end{aligned}
\]
\end{proof}

\begin{theorem}
\label{thm:sum-formulas}
For every integer $1\le r\le N$,
\[
A_{N,r}
=N[z^{N-r}]\frac{1+z}{(1-z)(1-3z+z^2)^r}, \qquad
B_{N,r}
=\frac{N}{r}[z^{N-r}]\frac{1}{(1-3z+z^2)^r}
\]
holds.
\end{theorem}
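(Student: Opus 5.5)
The plan is to convert the sum over root sets into a sum over cyclic interval sequences, using the standard marking bijection. Mark one root as distinguished. The pairs $(R,v)$ with $|R|=r$ and $v\in R$ correspond bijectively to pairs $(a,(d_1,\dots,d_r))$, where $a\in\{1,\dots,N\}$ is the index of the marked root $v=v_a$ and $(d_1,\dots,d_r)$ is the composition of $N$ into $r$ positive parts, read cyclically starting from $v_a$. Since each $R$ has exactly $r$ markings, for any function $g$ of the interval multiset (Remark~\ref{rem:gap-multiset}) we obtain
\[
r\sum_{|R|=r} g(R)=N\sum_{\substack{d_1+\cdots+d_r=N\\ d_i\ge1}} g(d_1,\dots,d_r).
\]

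For $B_{N,r}$, take $g=\prod_i F_{2d_i}$ from Theorem~\ref{thm:center-root}. The composition sum equals $[z^N]\bigl(\sum_{d\ge1}F_{2d}z^d\bigr)^r$. By Lemma~\ref{lem:generating-functions} this is $[z^N]\,z^r/(1-3z+z^2)^r=[z^{N-r}](1-3z+z^2)^{-r}$. Dividing by $r$ gives the claimed formula for $B_{N,r}$.

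For $A_{N,r}$, take $g=\sum_i (L_{2d_i}-2)\prod_{j\ne i}F_{2d_j}$ from Theorem~\ref{thm:rim-roots}. Summing over compositions, each of the $r$ terms contributes the same amount by symmetry. Each term equals
\[
[z^N]\Bigl(\sum_{d\ge 1}(L_{2d}-2)z^d\Bigr)\Bigl(\sum_{d\ge1}F_{2d}z^d\Bigr)^{r-1}
=[z^N]\frac{z(1+z)}{(1-z)(1-3z+z^2)}\cdot\frac{z^{r-1}}{(1-3z+z^2)^{r-1}},
\]
again by Lemma~\ref{lem:generating-functions}. This simplifies to $[z^{N-r}](1+z)/\bigl((1-z)(1-3z+z^2)^r\bigr)$. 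Multiplying by $r$ cancels the $1/r$ from the marking identity, leaving $A_{N,r}=N[z^{N-r}]\cdots$ as stated.

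The main obstacle is verifying the marking bijection carefully, in particular the degenerate cases. The map $(R,v_a)\mapsto(a,(d_i))$ is inverted by placing roots at $a,\,a+d_1,\,a+d_1+d_2,\dots$ modulo $N$; distinctness of these positions follows from positivity of the $d_i$ and $\sum d_i=N$. The identity then holds even when $R$ has rotational symmetry, because we count markings and not orbits. The case $r=N$ (all $d_i=1$) and the case $r=1$ (where $d_1=N$ and the product over $j\ne i$ is empty) should be checked to fit the same formulas. For $r=1$ this gives $A_{N,1}=N(L_{2N}-2)$, consistent with the formula.
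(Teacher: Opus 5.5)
Your proposal is correct and is essentially the paper's argument: you double count pairs (root set, distinguished root) against pairs (starting rim vertex, composition of $N$ into $r$ positive parts) to get the factor $N/r$. You then evaluate the composition sums as coefficients of products of the two generating functions in Lemma~\ref{lem:generating-functions}, with the $r$ equal terms cancelling the $1/r$ for $A_{N,r}$, just as the paper does; your explicit inverse map and your remark that reading intervals from the marked root only rotates the sequence make precise a step the paper states briefly.
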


\begin{proof}
Put
\[
G(z)
=
\sum_{d\ge 1}F_{2d}z^d,
\qquad
H(z)
=
\sum_{d\ge 1}(L_{2d}-2)z^d.
\]
Then, by Lemma \ref{lem:generating-functions},
\[
G(z)
=
\frac{z}{1-3z+z^2},
\qquad
H(z)
=
\frac{z(1+z)}
{(1-z)(1-3z+z^2)}.
\]

Take a root set $R=\{v_{a_1},v_{a_2},\ldots,v_{a_r}\}$ consisting of
rim vertices, and let $d_1,d_2,\ldots,d_r$ be the corresponding cyclic
intervals.
Then
\[
d_1+d_2+\cdots+d_r=N,
\qquad
d_i\ge 1.
\]

We consider ordered $r$-tuples $(d_1,d_2,\ldots,d_r)$ of positive integers
satisfying $d_1+d_2+\cdots+d_r=N$.
If such a cyclic interval sequence and an initial rim vertex are chosen,
then the rim root set with that initial vertex as a root is uniquely
determined.
Conversely, if a rim root set and one of its root vertices as the initial
vertex are chosen, then the corresponding ordered cyclic interval sequence
is uniquely determined.

There are $N$ choices for the initial rim vertex.
On the other hand, each root set is counted exactly $r$ times, according
to the choice of one of its $r$ root vertices as the initial vertex.
Therefore, by Theorem \ref{thm:rim-roots}, we obtain
\[
\begin{aligned}
A_{N,r}
&=
\frac{N}{r}
\sum_{\substack{d_1+\cdots+d_r=N\\d_i\ge 1}}
\sum_{i=1}^{r}
(L_{2d_i}-2)
\prod_{\substack{1\le j\le r\\j\ne i}}
F_{2d_j} \\
&=
\frac{N}{r}
[z^N]\,rH(z)G(z)^{r-1}\\
&=
N[z^N]H(z)G(z)^{r-1}\\
&=
N[z^N]
\frac{z(1+z)}
{(1-z)(1-3z+z^2)}
\left(
\frac{z}{1-3z+z^2}
\right)^{r-1}\\
&=
N[z^N]
\frac{z^r(1+z)}
{(1-z)(1-3z+z^2)^r}\\
&=
N[z^{N-r}]
\frac{1+z}
{(1-z)(1-3z+z^2)^r}.
\end{aligned}
\]

Similarly, taking into account the multiplicity coming from the choice
of the initial root vertex, Theorem \ref{thm:center-root} gives
\[
\begin{aligned}
B_{N,r}
&=
\frac{N}{r}
\sum_{\substack{d_1+\cdots+d_r=N\\d_i\ge 1}}
\prod_{i=1}^{r}F_{2d_i}\\
&=
\frac{N}{r}
[z^N]G(z)^r\\
&=
\frac{N}{r}
[z^N]
\left(
\frac{z}{1-3z+z^2}
\right)^r\\
&=
\frac{N}{r}
[z^{N-r}]
\frac{1}
{(1-3z+z^2)^r}.
\end{aligned}
\]

Thus the two formulas are obtained.
\end{proof}

\begin{remark}
\label{rem:sum-special-cases}
If $r=1$ in Theorem \ref{thm:sum-formulas}, then we obtain
\[
A_{N,1}
=
N(L_{2N}-2),
\qquad
B_{N,1}
=
NF_{2N}.
\]
Indeed, there are $N$ choices for the rim root vertex, and the number of
rooted spanning forests for each root is given by Theorem
\ref{thm:wheel-tree} and Corollary \ref{cor:center-one-rim}, respectively.

If $r=N$, then
\[
A_{N,N}=N,
\qquad
B_{N,N}=1.
\]
Indeed, when all rim vertices are roots, two distinct rim root vertices
cannot belong to the same connected component.
Therefore, we must choose exactly one spoke joining the central vertex $c$
to one of the rim root vertices, and there are $N$ choices.

On the other hand, if the central vertex is also included in the root set,
then all vertices are roots, and hence the only rooted spanning forest is
the forest with no edges.
Thus $B_{N,N}=1$.
\end{remark}

\section{Extremal problems for root configurations}
\label{sec:extremal}

In this section, we study the relation between the arrangement of the root
set and the number of rooted spanning forests.
By Theorem \ref{thm:rim-roots} and Theorem \ref{thm:center-root}, the number
of rooted spanning forests is determined not by the exact positions of the
root vertices, but only by the multiset of cyclic intervals between adjacent
root vertices.
Therefore, for fixed $N$ and a fixed number $r$ of rim root vertices, we
determine which root configurations maximize or minimize the number of
rooted spanning forests.

In what follows, let $1\le r\le N$, and write $N=qr+s$ with $0\le s<r$.
That is, $q=\left\lfloor\frac{N}{r}\right\rfloor$.

We prepare a two-variable smoothing inequality which will be used in the
proofs of Theorem \ref{thm:center-extremal} and Theorem
\ref{thm:rim-extremal}.
For $n\ge 1$, put
\[
f_n=F_{2n},
\qquad
g_n=L_{2n}-2.
\]

\begin{lemma}
\label{lem:extremal-smoothing}
Let $a,b$ be integers satisfying $1\le a\le b-2$.
Then
\[
\begin{aligned}
f_{a+1}f_{b-1}&>f_af_b, \\
g_{a+1}f_{b-1}
+
g_{b-1}f_{a+1}
&>
g_af_b+g_bf_a,
\end{aligned}
\]
hold.
\end{lemma}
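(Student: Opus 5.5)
Both inequalities will follow from Binet's formula with base $\alpha=\varphi^2$, where $\varphi=(1+\sqrt5)/2$. The idea is that when $x+y$ is held fixed, both $f_xf_y$ and $g_xf_y+g_yf_x$ become explicit functions of $x-y$, or of $x$ and $y$ separately. Since $F_n=(\varphi^n-\varphi^{-n}(-1)^n)/\sqrt5$ and $L_n=\varphi^n+(-1)^n\varphi^{-n}$, the even-indexed terms carry no signs:
\[
f_n=\frac{\alpha^n-\alpha^{-n}}{\sqrt5},\qquad g_n=\alpha^n+\alpha^{-n}-2 .
\]
Throughout we set $x=a+1$, $y=b-1$ and compare with the pair $(a,b)$, which has the same sum $a+b$.

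For the first inequality we expand the product:
\[
f_xf_y=\frac{1}{5}\bigl(\alpha^{x+y}+\alpha^{-(x+y)}-\alpha^{x-y}-\alpha^{y-x}\bigr)=\frac{L_{2(x+y)}-L_{2|x-y|}}{5}.
\]
The sum $x+y$ is unchanged when we pass from $(a,b)$ to $(a+1,b-1)$. The difference drops from $b-a$ to $b-a-2$, and $b-a-2\ge0$ because $b\ge a+2$. The sequence $L_{2m}$ is strictly increasing for $m\ge0$ (it runs $2,3,7,\dots$). Hence $f_{a+1}f_{b-1}>f_af_b$.

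Alternatively, d'Ocagne's identity gives the difference directly: $F_{2a+2}F_{2b-2}-F_{2a}F_{2b}=F_2F_{2(b-a-1)}>0$.

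For the second inequality we first simplify the mixed expression. Expanding with the Binet forms above, the pure-$\alpha$ parts combine as
\[
\frac{1}{\sqrt5}\Bigl[(\alpha^x+\alpha^{-x})(\alpha^y-\alpha^{-y})+(\alpha^y+\alpha^{-y})(\alpha^x-\alpha^{-x})\Bigr]=\frac{2(\alpha^{x+y}-\alpha^{-(x+y)})}{\sqrt5}=2f_{x+y}.
\]
The $-2$ terms in $g_x$ and $g_y$ contribute $-2(f_x+f_y)$. This gives the key identity
\[
g_xf_y+g_yf_x=2f_{x+y}-2(f_x+f_y).
\]
I would verify it on a small case, for instance $x=y=1$: the left side is $1\cdot1+1\cdot1=2$, and the right side is $2\cdot3-4=2$.

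Since $x+y=a+b$ in both cases, the second inequality is equivalent to $f_{a+1}+f_{b-1}<f_a+f_b$. Rearranged, this reads $f_b-f_{b-1}>f_{a+1}-f_a$, that is, $F_{2b-1}>F_{2a+1}$. This holds because $2b-1\ge 2a+3>2a+1$ and $F_n$ is strictly increasing for $n\ge2$. Here $2a+1\ge3$, so the indices lie in the strictly increasing range.

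The main obstacle is spotting the collapse $g_xf_y+g_yf_x=2f_{x+y}-2(f_x+f_y)$. A direct attempt through $L_{2n}-2=5F_n^2+2(-1)^n-2$ introduces parity cases, and working with base $\alpha$ is what avoids them. Once that identity is in hand, both inequalities reduce to monotonicity of $L_{2m}$ and of $F_n$.
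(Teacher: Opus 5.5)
Your proof is correct. It shares the paper's starting point: Binet's formula with base $\varphi^2$ (the paper writes $\lambda$ for your $\alpha$), plus the observation that $a+b$ is unchanged by the move. From there the two arguments organize the algebra differently.

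The paper \emph{factors}. It writes $f_n=(\lambda^n-1)(\lambda^n+1)/(\sqrt5\,\lambda^n)$ and $g_n=(\lambda^n-1)^2/\lambda^n$, and obtains $g_af_b+g_bf_a=2(\lambda^a-1)(\lambda^b-1)(\lambda^{a+b}-1)/(\sqrt5\,\lambda^{a+b})$. Both differences then come out as products whose signs are fixed by $\lambda^{2b-2}-\lambda^{2a}$ and $\lambda^{b-1}-\lambda^a$.

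You instead \emph{linearize}, via $5f_xf_y=L_{2(x+y)}-L_{2|x-y|}$ and $g_xf_y+g_yf_x=2f_{x+y}-2(f_x+f_y)$. The invariant $x+y$ terms then cancel additively, and everything reduces to monotonicity of $L_{2m}$ and of $F_n$.

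What your route buys:
\begin{itemize}
\item Exact integer values for the two differences, namely $F_{2(b-a-1)}$ and $2(F_{2b-1}-F_{2a+1})$. Expanding the paper's factored expressions gives the same numbers.
\item Visible structure: the first inequality is strict log-concavity of $n\mapsto F_{2n}$, and the second is exactly its strict convexity.
\end{itemize}

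What the paper's route buys: it is more mechanical. No auxiliary identity has to be spotted, and positivity is read off factor by factor from $\lambda>1$.

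A minor naming point: $F_{2a+2}F_{2b-2}-F_{2a}F_{2b}=F_2F_{2(b-a-1)}$ is an instance of Vajda's identity $F_{n+i}F_{n+j}-F_nF_{n+i+j}=(-1)^nF_iF_j$, not d'Ocagne's. The formula itself is right.
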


\begin{proof}
Put
\[
\varphi=\frac{1+\sqrt5}{2},
\qquad
\lambda=\varphi^2.
\]
Then $\lambda>1$, and by Binet's formula, we have
\[
f_n
=
F_{2n}
=
\frac{\lambda^n-\lambda^{-n}}{\sqrt5}
=
\frac{(\lambda^n-1)(\lambda^n+1)}
{\sqrt5\,\lambda^n}
\]
and
\[
g_n
=
L_{2n}-2
=
\lambda^n+\lambda^{-n}-2
=
\frac{(\lambda^n-1)^2}{\lambda^n}.
\]

First,
\[
f_af_b
=
\frac{(\lambda^{2a}-1)(\lambda^{2b}-1)}
{5\lambda^{a+b}}.
\]
The sum $a+b$ is unchanged by the smoothing operation
\[
(a,b)\longmapsto(a+1,b-1).
\]
Therefore,
\[
\begin{aligned}
f_{a+1}f_{b-1}-f_af_b
&=
\frac{\lambda^2-1}
{5\lambda^{a+b}}
\left(
\lambda^{2b-2}-\lambda^{2a}
\right).
\end{aligned}
\]
Since $b\ge a+2$, we have $2b-2>2a$.
Hence
\[
f_{a+1}f_{b-1}-f_af_b>0.
\]

Next, using the above expressions and simplifying, we get
\[
g_af_b+g_bf_a
=
\frac{2}{\sqrt5}
\frac{
(\lambda^a-1)(\lambda^b-1)(\lambda^{a+b}-1)
}
{\lambda^{a+b}}.
\]
Thus
\[
g_{a+1}f_{b-1}
+
g_{b-1}f_{a+1}
-
g_af_b
-
g_bf_a
=
\frac{
2(\lambda-1)(\lambda^{a+b}-1)
(\lambda^{b-1}-\lambda^a)
}
{\sqrt5\,\lambda^{a+b}}.
\]
Since $b\ge a+2$, we have $b-1>a$, and hence
$\lambda^{b-1}-\lambda^a>0$.
Therefore,
\[
g_{a+1}f_{b-1}
+
g_{b-1}f_{a+1}
>
g_af_b+g_bf_a.
\]
\end{proof}

\subsection{The case where the central vertex is included in the root set}

We first consider the case where the central vertex is included in the
root set.

\begin{theorem}
\label{thm:center-extremal}
Let $1\le r\le N$, and write
\[
N=qr+s,
\qquad
0\le s<r.
\]
Let $R$ be a set consisting of $r$ rim vertices, and let
$d_1,d_2,\ldots,d_r$ be the corresponding cyclic intervals.
The number $\mathcal F_{W_{N+1}}(\{c\}\cup R)$ of rooted spanning forests
with the central vertex included in the root set is maximized if and only if
the cyclic intervals are as equal as possible.
That is, the maximum is attained when, in the cyclic interval sequence
$(d_1,\ldots,d_r)$, the value $q+1$ appears $s$ times and the value $q$
appears $r-s$ times.
The maximum value is
\[
F_{2(q+1)}^{\,s}F_{2q}^{\,r-s}.
\]

On the other hand, the minimum is attained if and only if the cyclic interval
sequence $(d_1,\ldots,d_r)$ is a permutation of
\[
(N-r+1,1,\ldots,1).
\]
The minimum value is
\[
F_{2(N-r+1)}.
\]
\end{theorem}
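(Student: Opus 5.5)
By Theorem \ref{thm:center-root}, $\mathcal F_{W_{N+1}}(\{c\}\cup R)=\prod_{i=1}^r f_{d_i}$ with $f_n=F_{2n}$. The value therefore depends only on the multiset of the $d_i$, which ranges over all compositions of $N$ into $r$ positive parts, and every composition is realized by some root set. The problem becomes a discrete optimization of $P(d)=\prod f_{d_i}$ over $\{d\in\mathbb Z_{\ge1}^r:\sum d_i=N\}$, a finite set, so extrema exist.

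\emph{Maximum.} Suppose some pair satisfies $d_j\ge d_i+2$. Lemma \ref{lem:extremal-smoothing}, applied with $a=d_i$ and $b=d_j$, shows that replacing $(d_i,d_j)$ by $(d_i+1,d_j-1)$ strictly increases $f_{d_i}f_{d_j}$. All other factors are positive ($f_n\ge1$) and unchanged, so $P$ strictly increases. Hence a maximizer must have $|d_i-d_j|\le1$ for all $i,j$. The only such multiset with sum $N=qr+s$ consists of $s$ entries equal to $q+1$ and $r-s$ entries equal to $q$. Since a maximizer exists, this balanced configuration is the unique maximizer up to order. This gives the ``if and only if'' and the value $F_{2(q+1)}^{s}F_{2q}^{r-s}$.

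\emph{Minimum.} I would use the reverse move: if two entries $d_i\le d_j$ both satisfy $d_i\ge2$, replace them by $(d_i-1,d_j+1)$. Set $a=d_i-1$ and $b=d_j+1$, so $a\ge1$ and $b\ge a+2$ since $d_j\ge d_i$. Lemma \ref{lem:extremal-smoothing} then gives $f_{d_i}f_{d_j}=f_{a+1}f_{b-1}>f_af_b$, so this ``unsmoothing'' strictly decreases $P$. A minimizer therefore has at most one entry $\ge2$, i.e.\ it is a permutation of $(N-r+1,1,\dots,1)$. When $r=N$ this is just $(1,\dots,1)$. Conversely, all these permutations give the same value $f_{N-r+1}\cdot f_1^{r-1}=F_{2(N-r+1)}$, since $F_2=1$. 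Existence of a minimizer finishes the ``if and only if''.

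The main point requiring care is checking the index bookkeeping in the reverse move, namely that the hypothesis $1\le a\le b-2$ of the lemma holds in the edge case $d_i=d_j$. It does, since then $b-a=2$. Beyond that I expect no real obstacle. One small remark: when $s=0$ or when $r=1$, the extremal configurations coincide, and the statement remains consistent.
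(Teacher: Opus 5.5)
Your proposal is correct and follows essentially the same route as the paper: you reduce to $\prod_{i=1}^r F_{2d_i}$ via Theorem~\ref{thm:center-root} and apply Lemma~\ref{lem:extremal-smoothing} in both directions, using $(a,b)\mapsto(a+1,b-1)$ for the maximum and the reverse move applied to $a-1,b+1$ for the minimum. The only difference is cosmetic: you close the ``if and only if'' by the existence of an extremizer on the finite set of compositions of $N$ (all of which are realized by root sets), whereas the paper shows the smoothing process terminates via the monotone sum of squares, and your version makes the necessity/sufficiency step slightly more explicit.
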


\begin{proof}
The cyclic intervals $d_1,d_2,\ldots,d_r$ satisfy
$d_1+d_2+\cdots+d_r=N$.

If $r=1$, then the assertion about the maximum and minimum is clear from
Corollary \ref{cor:center-one-rim}.

Assume that $r\ge 2$.
By Theorem \ref{thm:center-root},
\[
\mathcal F_{W_{N+1}}(\{c\}\cup R)
=
\prod_{i=1}^{r}f_{d_i}.
\]

First, suppose that two cyclic intervals $a,b$ satisfy $b\ge a+2$.
If we replace them by
\[
(a,b)\longmapsto(a+1,b-1),
\]
then the total sum of the cyclic intervals does not change.
By Lemma \ref{lem:extremal-smoothing}, we have
$f_{a+1}f_{b-1}>f_af_b$.
Hence, with all other factors fixed, this smoothing operation strictly
increases the number of rooted spanning forests.

We repeat this operation as long as there are two cyclic intervals whose
difference is at least $2$.
Each smoothing operation changes the sum of squares of the two cyclic
intervals from $a^2+b^2$ to $(a+1)^2+(b-1)^2$, and the difference is
\[
a^2+b^2-\bigl((a+1)^2+(b-1)^2\bigr)
=
2(b-a-1)>0.
\]
Thus the total sum of squares of the cyclic intervals decreases at each
step, and the process terminates after finitely many steps.
At the end of the process, any two cyclic intervals differ by at most $1$.
Since the sum of the cyclic intervals is $N=qr+s$, each cyclic interval is
$q$ or $q+1$, with $q+1$ appearing $s$ times and $q$ appearing $r-s$ times.
Since the value strictly increases at each smoothing step, this condition
is necessary and sufficient for the maximum.
Therefore, the maximum value is
\[
F_{2(q+1)}^{\,s}F_{2q}^{\,r-s}.
\]

Next, we consider the minimum.
Suppose that two cyclic intervals $a,b$ satisfy $2\le a\le b$.
Applying Lemma \ref{lem:extremal-smoothing} to $a-1$ and $b+1$, we obtain
$f_af_b>f_{a-1}f_{b+1}$.
Therefore, the operation
\[
(a,b)\longmapsto(a-1,b+1)
\]
which makes one cyclic interval smaller and the other larger strictly
decreases the number of rooted spanning forests.

We can repeat this operation as long as at least two cyclic intervals are
greater than $1$.
Each operation changes the sum of squares of the two cyclic intervals from
$a^2+b^2$ to $(a-1)^2+(b+1)^2$, and the increase is
\[
(a-1)^2+(b+1)^2-a^2-b^2
=
2(b-a+1)>0.
\]
Since the cyclic intervals are positive integers and their total sum is
fixed to be $N$, this process also terminates after finitely many steps.
At the end, at most one cyclic interval is greater than $1$.
Since the total sum of the cyclic intervals is $N$, the cyclic interval
sequence is a permutation of
\[
(N-r+1,1,1,\ldots,1).
\]
When $N=r$, all cyclic intervals are equal to $1$.
Since the value strictly decreases at each operation, this condition is
necessary and sufficient for the minimum.

Finally, since $F_2=1$, the minimum value is $F_{2(N-r+1)}$.
\end{proof}

\subsection{The case where only rim vertices are roots}

Next, we consider the case where the central vertex is not included in the
root set, and only rim vertices are roots.

\begin{theorem}
\label{thm:rim-extremal}
Let $1\le r\le N$, and write
\[
N=qr+s,
\qquad
0\le s<r.
\]
Let $R$ be a set consisting of $r$ rim vertices, and let
$d_1,d_2,\ldots,d_r$ be the corresponding cyclic intervals.
The number $\mathcal F_{W_{N+1}}(R)$ of rooted spanning forests with only
rim vertices as roots is maximized if and only if the cyclic intervals are
as equal as possible.
That is, the maximum is attained when, in the cyclic interval sequence
$(d_1,\ldots,d_r)$, the value $q+1$ appears $s$ times and the value $q$
appears $r-s$ times.
The maximum value is
\[
\begin{aligned}
F_{2(q+1)}^{\,s}F_{2q}^{\,r-s}\times\left(s\frac{L_{2(q+1)}-2}{F_{2(q+1)}}+(r-s)\frac{L_{2q}-2}{F_{2q}}\right).
\end{aligned}
\]

On the other hand, the minimum is attained if and only if, in the cyclic
interval sequence $(d_1,\ldots,d_r)$, one cyclic interval is $N-r+1$ and the
remaining $r-1$ cyclic intervals are all equal to $1$.
The minimum value is
\[
L_{2(N-r+1)}-2
+
(r-1)F_{2(N-r+1)}.
\]
\end{theorem}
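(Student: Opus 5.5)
The plan is to mirror the proof of Theorem \ref{thm:center-extremal}, now using the full expression of Theorem \ref{thm:rim-roots}. Write
\[
\Phi(d_1,\ldots,d_r)=\mathcal F_{W_{N+1}}(R)=\sum_{i=1}^{r} g_{d_i}\prod_{j\ne i} f_{d_j}.
\]
The case $r=1$ is Corollary \ref{cor:r-one}, so assume $r\ge2$. Fix two positions, say $d_1=a$ and $d_2=b$, and set $P=\prod_{j\ge3} f_{d_j}>0$ and $S=\sum_{i\ge3} g_{d_i}\prod_{j\ge3,\,j\ne i} f_{d_j}\ge0$. Separating the terms $i=1,2$ from the rest gives
\[
\Phi=\bigl(g_af_b+g_bf_a\bigr)P+f_af_b\,S.
\]
The whole argument rests on this decomposition: $\Phi$ is a positive combination of the two-variable quantities $g_af_b+g_bf_a$ and $f_af_b$, with coefficients that do not depend on $(a,b)$.

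For the maximum, suppose $b\ge a+2$ and replace $(a,b)$ by $(a+1,b-1)$. By both inequalities of Lemma \ref{lem:extremal-smoothing}, the first summand strictly increases, since $P>0$. The second summand weakly increases, since $S\ge0$. Hence $\Phi$ strictly increases. As in the proof of Theorem \ref{thm:center-extremal}, the sum of squares of the $d_i$ strictly decreases, so repeated smoothing terminates. It stops exactly when all intervals lie in $\{q,q+1\}$, with $q+1$ occurring $s$ times.

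The ``if and only if'' needs one more observation. Every non-balanced configuration can be strictly improved, so it is not a maximizer. All balanced configurations share the same multiset, so they give the same value by Remark \ref{rem:gap-multiset}. Together these show the balanced multiset is precisely the set of maximizers. Its value follows from the product form of Theorem \ref{thm:rim-roots}:
\[
F_{2(q+1)}^{\,s}F_{2q}^{\,r-s}\left(s\frac{L_{2(q+1)}-2}{F_{2(q+1)}}+(r-s)\frac{L_{2q}-2}{F_{2q}}\right).
\]

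For the minimum, suppose $2\le a\le b$ and apply the move $(a,b)\mapsto(a-1,b+1)$. Lemma \ref{lem:extremal-smoothing} applied to the pair $(a-1,b+1)$ is valid, since $1\le a-1\le (b+1)-2$. It gives strict decrease of $g_af_b+g_bf_a$ and weak decrease of $f_af_b$, so $\Phi$ strictly decreases. The sum of squares strictly increases and is bounded, so the process terminates once at most one interval exceeds $1$. The resulting multiset is $\{N-r+1,1,\ldots,1\}$, and the same strictness-plus-uniqueness-of-multiset argument as above gives the ``if and only if''. Using $f_1=F_2=1$ and $g_1=L_2-2=1$, the minimum value is
\[
g_{N-r+1}+(r-1)f_{N-r+1}=L_{2(N-r+1)}-2+(r-1)F_{2(N-r+1)}.
\]

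The main obstacle is the one point that differs from the center case: a pairwise move now also changes the cross terms, where one of $d_1,d_2$ appears only inside the product. The decomposition above handles this, because those terms carry the factor $f_af_b$ and the first inequality of the lemma controls that factor. So the only care needed is to check that $P>0$ and $S\ge0$, which make the inequalities strict and weak respectively. One should also treat $N=r$, where both extremal configurations coincide with all intervals equal to $1$, consistently with the formulas.
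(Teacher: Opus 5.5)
Your proposal is correct and follows the paper's proof essentially step for step. Your decomposition $\Phi=(g_af_b+g_bf_a)P+f_af_bS$ is the paper's $P\,(g_af_b+g_bf_a+Cf_af_b)$ with $S=PC$, and the smoothing and reverse-smoothing moves, the sum-of-squares termination, and the evaluation of the extremal values are all the same; your explicit ``if and only if'' argument (every non-extremal configuration can be strictly improved, and the extremal multiset is unique) simply spells out what the paper states in one line.
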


\begin{proof}
The cyclic intervals $d_1,d_2,\ldots,d_r$ satisfy
$d_1+d_2+\cdots+d_r=N$.

If $r=1$, then the assertion about the maximum and minimum is clear from
Theorem \ref{thm:wheel-tree}.

Assume that $r\ge 2$.
For simplicity, write
$\Phi(d_1,\ldots,d_r)=\mathcal F_{W_{N+1}}(R)$.
By Theorem \ref{thm:rim-roots},
\[
\Phi(d_1,\ldots,d_r)
=
\mathcal F_{W_{N+1}}(R)
=
\sum_{i=1}^{r}
g_{d_i}
\prod_{\substack{1\le j\le r\\j\ne i}}
f_{d_j}.
\]

Let two cyclic intervals be $d_p=a$ and $d_q=b$, and assume that
$b\ge a+2$.
Fix all remaining cyclic intervals, and put
\[
P=
\prod_{\substack{1\le k\le r\\k\ne p,q}}
f_{d_k},
\qquad
C=
\sum_{\substack{1\le k\le r\\k\ne p,q}}
\frac{g_{d_k}}{f_{d_k}}.
\]
The empty product is interpreted as $1$, and the empty sum is interpreted
as $0$.

Since each $f_{d_k}$ is positive and each $g_{d_k}$ is nonnegative, we have
$P>0$ and $C\ge 0$.
Then
\[
\Phi(d_1,\ldots,d_r)
=
P
\left(
g_af_b+g_bf_a+Cf_af_b
\right).
\]

After replacing the cyclic intervals $(a,b)$ by $(a+1,b-1)$, the value becomes
\[
P
\left(
g_{a+1}f_{b-1}
+
g_{b-1}f_{a+1}
+
Cf_{a+1}f_{b-1}
\right).
\]
By Lemma \ref{lem:extremal-smoothing} and $P>0,C\ge 0$, we have
\[
g_{a+1}f_{b-1}
+
g_{b-1}f_{a+1}
>
g_af_b+g_bf_a
\]
and
\[
f_{a+1}f_{b-1}>f_af_b.
\]
Moreover, $P>0$, $C\ge 0$. 

Therefore,
\[
\Phi(\ldots,a+1,\ldots,b-1,\ldots)
>
\Phi(\ldots,a,\ldots,b,\ldots).
\]

Thus smoothing two cyclic intervals whose difference is at least $2$
strictly increases the number of rooted spanning forests.
As in the case where the central vertex is included in the root set, this
operation terminates after finitely many steps, and at the end all cyclic
intervals differ by at most $1$.
Therefore, the necessary and sufficient condition for attaining the maximum
is that, among the cyclic intervals, $q+1$ appears $s$ times and $q$ appears
$r-s$ times.
Using the product-and-sum form in Theorem \ref{thm:rim-roots}, the maximum
value is
\[
\begin{aligned}
F_{2(q+1)}^{\,s}F_{2q}^{\,r-s}\times\left(s\frac{L_{2(q+1)}-2}{F_{2(q+1)}}+(r-s)\frac{L_{2q}-2}{F_{2q}}\right).
\end{aligned}
\]

Next, we consider the minimum.
Suppose that two cyclic intervals satisfy $2\le a\le b$.
Applying the smoothing inequality proved above to $a-1$ and $b+1$, we get
\[
\Phi(\ldots,a-1,\ldots,b+1,\ldots)
<
\Phi(\ldots,a,\ldots,b,\ldots).
\]
Therefore, if we take $1$ from one cyclic interval and add it to a larger
cyclic interval, then the number of rooted spanning forests strictly
decreases.

Repeating this operation, we reach a configuration in which at most one
cyclic interval is greater than $1$.
Since the total sum of the cyclic intervals is $N$, the necessary and
sufficient condition for attaining the minimum is that the cyclic interval
sequence is a permutation of
\[
(N-r+1,1,1,\ldots,1).
\]
When $N=r$, all cyclic intervals are equal to $1$.

Put $m=N-r+1$.
For the minimizing configuration, the cyclic interval sequence is a
permutation of
\[
(m,1,\ldots,1).
\]
Since $F_2=1$ and $L_2-2=1$, we obtain
\[
\begin{aligned}
\mathcal F_{W_{N+1}}(R)
&=(L_{2m}-2)F_2^{\,r-1}+(r-1)(L_2-2)F_{2m}F_2^{\,r-2} \\
&=L_{2m}-2+(r-1)F_{2m}.
\end{aligned}
\]
Thus the minimum value is
\[
L_{2(N-r+1)}-2
+
(r-1)F_{2(N-r+1)}.
\]
\end{proof}

\section{Conclusion and future problems}
\label{sec:conclusion}
In this paper, we studied rooted spanning forests in the wheel graph
$W_{N+1}$ and gave explicit formulas for their numbers in terms of
Fibonacci and Lucas numbers.
These formulas do not depend on the order of the cyclic intervals, but only
on their multiset.
Moreover, by using the correspondence between rooted spanning forests and
vertex identification, we also obtained formulas for the number of spanning
trees of quotient graphs of wheel graphs obtained by identifying several
vertices into one vertex.
In particular, the formula for the case where only rim vertices are roots
extends the known result for two identified vertices to an arbitrary number
of rim vertices.

We also fixed the number of rim root vertices and expressed the sum of the
numbers of rooted spanning forests over all root sets as a coefficient of a
generating function.
Furthermore, we solved the extremal problem for the arrangement of roots.
We showed that, whether or not the central vertex is included in the root
set, the number of rooted spanning forests is maximized when the rim root
vertices are placed as evenly as possible, and minimized when they are placed
as consecutively as possible.

For future work, it would be interesting to consider block-type vertex
identification, in which several vertex sets are identified into distinct
vertices, rather than identifying all root vertices into a single vertex.
In this case, by using the block structure of Laplacian principal submatrices
and transfer matrices, one may try to derive explicit formulas for the number
of spanning trees and rooted spanning forests in terms of Chebyshev
polynomials or generalized Lucas-type sequences.

\section*{Acknowledgments}

The research of Yuuho Tanaka was supported by
JSPS KAKENHI Grant Number JP25K23339.

\section*{Conflict of interest}

The authors declare that they have no conflict of interest.

\section*{Data availability}

Data sharing is not applicable to this article as no datasets were generated or analysed during the current study.



\begin{thebibliography}{99}

\bibitem{BY2006}
A. T. Benjamin and C. R. Yerger,
Combinatorial interpretations of spanning tree identities,
\textit{Bulletin of the Institute of Combinatorics and its Applications}
\textbf{47} (2006), 37--42.

\bibitem{Chaiken1982}
S. Chaiken,
A combinatorial proof of the all minors matrix tree theorem,
\textit{SIAM Journal on Algebraic and Discrete Methods}
\textbf{3} (1982), 319--329.

\bibitem{Chebotarev2008}
P. Chebotarev,
Spanning forests and the golden ratio,
\textit{Discrete Applied Mathematics}
\textbf{156} (2008), 813--821.

\bibitem{ChebotarevShamis1997}
P. Chebotarev and E. Shamis,
The matrix-forest theorem and measuring relations in small social groups,
\textit{Automation and Remote Control}
\textbf{58} (1997), 1505--1514.

\bibitem{Hilton1974}
A. J. W. Hilton,
Spanning trees and Fibonacci and Lucas numbers,
\textit{The Fibonacci Quarterly}
\textbf{12} (1974), 259--262.

\bibitem{MiezakiTamura2025}
T. Miezaki and S. Tamura,
Fibonacci and Lucas numbers arising from two-component spanning forests of wheel graphs,
arXiv:2512.18214, 2025.

\bibitem{Myers1971}
B. R. Myers,
Number of spanning trees in a wheel,
\textit{IEEE Transactions on Circuit Theory}
\textbf{CT-18} (1971), 280--282.

\bibitem{Sedlacek1969}
J. Sedl\'a\v{c}ek,
On the number of spanning trees of finite graphs,
\textit{\v{C}asopis pro p\v{e}stov\'an\'i matematiky}
\textbf{94} (1969), 217--221.

\bibitem{TamuraTanaka2026}
S. Tamura and Y. Tanaka,
Number of spanning trees in a wheel graph with two identified vertices
via hitting times,
\textit{Discrete Applied Mathematics}
\textbf{387} (2026), 318--336.

\end{thebibliography}
\end{document}